\newtheorem{claim}{Claim}
\newtheorem{theorem}{Theorem}
\newtheorem{lemma}[theorem]{Lemma}
\newtheorem{conjecture}{Conjecture}
\newtheorem{observation}{Observation}
\newcommand{\set}[1]{\ensuremath{\left\{#1 \right\}}}
\newcommand{\skap}[1]{\ensuremath{\overline{\varphi}_\kappa^{#1}}}
\newcommand{\slam}[1]{\ensuremath{{\varphi}_\lambda^{#1}}}
\newcommand{\kap}[1]{\ensuremath{\varphi_\kappa^{#1}}}
\newcommand{\cir}[2]{\ensuremath{\varphi_\zeta^{(#1, #2)}}}
\def\le{\leqslant}
\def\ge{\geqslant}
\def\paren#1{\left( #1 \right)}
\newenvironment{proofclaim}[1][]%
    {\noindent \emph{Proof.} {}{#1}{}}{$~$\hfill $~\blacklozenge$ \vspace{0.2cm}}
\title{On non-repetitive sequences of arithmetic progressions:\\the cases $k \in \{4,5,6,7,8\}$}
\author
{
	Borut Lu\v{z}ar\thanks{Faculty of Information Studies, Novo mesto, Slovenia.  
			E-Mail: \texttt{borut.luzar@gmail.com}} \footnotemark[4],\quad
	Martina Mockov\v{c}iakov\'{a}\thanks{NTIS, University of West Bohemia, Pilsen, Czech Republic.
			E-Mail: \texttt{mmockov@ntis.zcu.cz}}, \quad
 	Pascal Ochem\thanks{LIRMM, Universit\'e Montpellier 2, Montpellier, France. \newline
    	E-Mails: \texttt{\{pascal.ochem,alexandre.pinlou\}@lirmm.fr}}, \\    
 	Alexandre Pinlou\footnotemark[3],\quad
	Roman Sot\'{a}k\thanks{Institute of Mathematics, Faculty of Science, Pavol J. \v{S}af\'arik University, Ko\v sice, Slovakia. 
			E-Mail: \texttt{roman.sotak@upjs.sk}} 
}
\begin{document}
\maketitle

{\abstract
{
	A \textit{$d$-subsequence} of a sequence $\varphi = x_1\dots x_n$ is a subsequence $x_i x_{i+d} x_{i+2d} \dots$, for any positive integer $d$ and
	any $i$, $1 \le i \le n$. A \textit{$k$-Thue sequence} is a sequence in which every $d$-subsequence, for $1 \le d \le k$, 
	is non-repetitive, i.e. it contains no consecutive equal subsequences.
	In 2002, Grytczuk proposed a conjecture that for any $k$, $k+2$ symbols are enough to construct a $k$-Thue sequences of arbitrary lengths.
	So far, the conjecture has been confirmed for $k \in \set{1,2,3,5}$. 
	Here, we present two different proving techniques, and confirm it for all $k$, with $2 \le k \le 8$.	
}}

\bigskip
{\noindent\small \textbf{Keywords:} non-repetitive sequence, $k$-Thue sequence, $(k+2)$-conjecture}
\section{Introduction}

A \textit{repetition} in a sequence $\varphi$ is a subsequence $\rho = x_1 \dots x_{2t}$ 
of consecutive terms of $\varphi$ such that $x_i = x_{t+i}$ for every $i=1,\dots,t$. 
The length of a repetition is hence always even and comprised of two identical \textit{repetition blocks},
$\rho_1 = x_1\dots x_t$ and $\rho_2 = x_{t+1}\dots x_{2t}$. 
A sequence is called \textit{non-repetitive} or \textit{Thue} if it does not contain any repetition. 
Surprisingly, as shown by Thue~\cite{Thu06} (see \cite{Ber94} for a translation), 
having three distinct symbols suffices to construct non-repetitive sequences of arbitrary lengths.
This result is a fundamental piece in the theory of combinatorics on words. After that, 
a number of other concepts related to repetitions has been presented (see e.g.~\cite{BerPer07} for more details). 

In this paper, we continue dealing with the following generalization.
A (possibly infinite) sequence $\varphi$ is \textit{$k$-Thue} (or \textit{non-repetitive up to mod $k$}) 
if every $d$-subsequence of $\varphi$ is Thue, for $1 \le d \le k$. 
By a \textit{$d$-subsequence} of $\varphi$ we mean an arithmetic subsequence $x_i x_{i+d} x_{i+2d} \dots$ of $\varphi$.
Consider a sequence 
$$
	a \ \underline{b} \ d \ \underline{c} \ b \ \underline{c},
$$
which is Thue, but not $2$-Thue, since the $2$-subsequence $b \ c \ c$ is not Thue. 
On the other hand, 
$$
	\underline{a} \ b \ c \ \underline{a} \ d \ b
$$ is $2$-Thue, but not $3$-Thue, due to the repetition in the $3$-subsequence $a \ a$.

This generalization was introduced by Currie and Simpson~\cite{CurSim02} and has been immediately followed 
by an intriguing conjecture due to Grytczuk~\cite{Gry02}.
\begin{conjecture}[Grytczuk, 2002]
	\label{conj:k+2}
	For any positive integer $k$, $k+2$ distinct symbols suffice to construct a $k$-Thue sequence of any length.
\end{conjecture}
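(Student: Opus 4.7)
The plan is to approach Conjecture~\ref{conj:k+2} via an inductive framework driven by an algorithmic Lovász Local Lemma argument, supplemented where needed by explicit morphic sequences. The base case $k=1$ is Thue's classical theorem on avoidance of squares with three symbols. For the inductive step, I would attempt to construct arbitrarily long $k$-Thue sequences over an alphabet $\Sigma$ with $|\Sigma|=k+2$ using the entropy compression template of Moser and Tardos as adapted to non-repetitive words by Grytczuk, Kozik, and Micek: grow the sequence one symbol at a time, choosing each new symbol uniformly at random from $\Sigma$, and whenever a forbidden repetition appears in some $d$-subsequence with $1 \le d \le k$, erase its latter repetition block and continue.

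The analysis rests on an injective encoding of an execution producing $N$ successfully appended symbols as a triple consisting of the current sequence, a log of bad events, and the residual random bits. Termination with positive probability reduces to showing that the generating function enumerating possible logs grows more slowly than $(k+2)^N$. Concretely, one parameterises the log by a sequence of triples $(d,i,t)$, recording that a repetition of length $2t$ was discovered at position $i$ in the $d$-subsequence; the task becomes showing that the bits needed to record each triple are at most recovered from the randomness freed by the erasure.

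The hard part, and the reason the full conjecture has resisted proof for two decades, is closing the gap between the naive LLL bound, which yields an alphabet of size roughly $k + O(\sqrt{k})$, and the tight value $k+2$. I expect the crux to be a structural coupling lemma: a repetition inside the $d$-subsequence severely constrains which letters may appear in the $d'$-subsequences for $d' \ne d$, so the $k$ kinds of bad events above each position should be counted jointly rather than as independent contributions. Absent such a coupling, a fallback is to construct, for each $k$, an explicit morphism $\sigma \colon \Sigma^* \to \Sigma^*$ on $k+2$ letters whose fixed point is $k$-Thue; the verification then reduces, via a Crochemore-style kernel argument, to checking repetitions of length bounded by a function of $|\sigma|$. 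Designing such morphisms uniformly in $k$, however, is itself a substantial combinatorial challenge, and is likely why the present paper settles only the small-$k$ window $k \in \set{4,5,6,7,8}$.
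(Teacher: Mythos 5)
You set out to prove Conjecture~\ref{conj:k+2} in full, but the statement is a conjecture that remains open; the paper itself only establishes Theorem~\ref{thm:45678}, i.e.\ the cases $k \in \set{4,5,6,7,8}$, so the realistic benchmark is whether your plan could deliver at least that much, and it cannot. Your primary route --- entropy compression \`a la Moser--Tardos and Grytczuk--Kozik--Micek --- is precisely the machinery already applied to $k$-Thue sequences in~\cite{GryKozWit11}, and there it yields an alphabet of size $2k + O(\sqrt{k})$, not the $k + O(\sqrt{k})$ you assert; the best bound known by any method is $2k$~\cite{KraLuzMocSot15}. The ``structural coupling lemma'' you hope would close the gap to $k+2$ is exactly the missing idea, and you do not supply it, so this branch is a restatement of the open problem. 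There is also a mechanical flaw in the algorithm as described: when a repetition of half-length $t$ is found in the $d$-subsequence ending at the newest position, its latter block occupies $t$ positions spread over a suffix of length about $td$, interleaved with symbols from other residue classes. Those interleaved $\sim (d-1)t$ symbols are \emph{not} determined by the former block, so you face a dilemma: erase only the $t$ determined symbols (leaving holes that destroy the grow-one-symbol-at-a-time structure your encoding relies on) or erase the whole suffix (erasing undetermined symbols and breaking injectivity of the reconstruction). The repairs that make this work are what produce the $2k+O(\sqrt{k})$ plateau; nothing in your sketch improves on it, and with only $k+2$ letters the accounting does not close.

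Your fallback is, in outline, the paper's Technique \#1 (Section~\ref{sec:pas}), but left entirely unexecuted and with two substantive deviations that would derail the verification. First, the paper's morphisms $\mu_k$ are not fixed points of a morphism on $k+2$ letters: they are uniform morphisms from a $3$-letter alphabet, applied to $\paren{\tfrac74}^+$-free ternary words, which exist by Dejean's theorem (Theorem~\ref{thm:dej}) --- not merely square-free ones. This is essential, not cosmetic: the computer-verified synchronization property (a $d$-subsequence meeting both $\mu_k(x_1)$ and $\mu_k(x_4)$ in $\mu_k(x_1x_2x_3x_4)$ determines $x_2x_3$) recovers the preimage only up to its first and last letters, so a long repetition in a $d$-subsequence of $\mu_k(\varphi)$ forces a factor $uvu$ with $\abs{v} \le 2$ in $\varphi$ --- a pattern that square-freeness does not exclude but $\paren{\tfrac74}^+$-freeness does once $\abs{u} \ge 7$, with the short cases killed by the finite check of Claim~\ref{cl:morf}. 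Second, no off-the-shelf Crochemore-style bounded-length test is known for $k$-Thue-ness of morphic words; the bounded-verification step has to be manufactured per $k$ by exhaustive search, which is exactly why this method is confined to a small window of $k$ and why the paper offers a second, structural technique (Section~\ref{sec:cons}: wreathing a Thue sequence built from a hexagonal-type morphism with circular blocks) as its candidate for arguments uniform in $k$. In short, you map the landscape accurately and honestly flag the gap yourself, but the proposal proves nothing: its main route is known to stall near $2k$, and its fallback is an unexecuted sketch of the paper's computer-assisted method missing the Dejean ingredient that makes that method sound.
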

It is easy to show that having only $k+1$ symbols there is a repetition in any sequence of length at least $2k+2$,
so the bound $k+2$ is tight. 

Since $1$-Thue sequences are simply Thue sequences, the above mentioned result establishes the conjecture for $k=1$.
The conjecture has also been confirmed for $k=2$ in~\cite{CurSim02} and independently in~\cite{KraLuzMocSot15},
for $k=3$ in~\cite{CurSim02}, and for $k=5$ in~\cite{CurMoo03}. 
Although it has been considered also for the case $k=4$ by Currie and Pierce~\cite{CurPie03} 
using an application of the fixing block method, it remains open for all the cases except $k \in \set{1,2,3,5}$.

Several upper bounds have been established, first being $e^{33}k$ due to Grytczuk~\cite{Gry02}, 
and then substantially improved to $2k + O(\sqrt{k})$ in~\cite{GryKozWit11}. 
Currently the best known upper bound is due to Kranjc et al.~\cite{KraLuzMocSot15}.
\begin{theorem}[Kranjc et al., 2015]
	\label{thm:2k}
	For any integer $k \ge 2$, $2k$ distinct symbols suffice to construct a $k$-Thue sequence of any length.
\end{theorem}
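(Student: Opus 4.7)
My plan is to prove the theorem by a probabilistic argument: I will show that a uniformly random sequence of any length $n$ over an alphabet $\Sigma$ of size $2k$ avoids all forbidden repetitions with positive probability, and then invoke a standard compactness argument to obtain an infinite $k$-Thue sequence. The vehicle is the Lov\'asz Local Lemma (LLL), applied to the family of ``bad events'' of the form ``some $d$-subsequence contains a repetition,'' ranging over $d \in \{1,\dots,k\}$.

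Concretely, I would index the bad events by triples $(d,i,t)$, where $B_{d,i,t}$ is the event that the length-$2t$ factor starting at position $i$ in the $d$-subsequence of the random word $\varphi$ is a repetition. Such an event depends on exactly $2t$ coordinates of $\varphi$ and has probability $\Pr[B_{d,i,t}] \le (2k)^{-t}$, since fixing the $t$ coordinates of the first block forces those of the second. Two events $B_{d,i,t}$ and $B_{d',i',t'}$ are independent unless their underlying coordinate sets intersect, and a direct count shows that $B_{d,i,t}$ is dependent on at most $O(t \cdot d' \cdot t')$ events of type $(d',\cdot,t')$. Plugging weights $x_{d,i,t} = \lambda^{-t}$ into the asymmetric LLL, for a suitable $\lambda > 1$, I would check that the product inequality is satisfied as soon as $|\Sigma| \ge 2k$.

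The hard part is making the LLL bookkeeping tight. A symmetric application of LLL only yields $2k + O(\sqrt{k})$, which is the bound of Grytczuk, Kozik and Witkowski. To reach exactly $2k$, the most promising route is the Moser--Tardos entropy compression method: run a randomized algorithm that appends a uniform random symbol and, upon creating a repetition in some $d$-subsequence, rolls back the minimal suffix that erases it; then encode the entire execution trace injectively into a binary string whose length must not exceed the total number of random bits consumed. Balancing the two sides gives a threshold on $|\Sigma|$, and the delicate combinatorial task is to show that the natural encoding of the rolled-back blocks --- simultaneously recording which $d \in \{1,\dots,k\}$ triggered each rollback and the period $t$ of the erased repetition --- can be done with essentially $\log_2 |\Sigma|$ bits per erased symbol plus only an additive constant per rollback, which is exactly what is needed to land on $2k$ rather than $2k + O(\sqrt{k})$. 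A fallback plan, in case the encoding cannot be tightened that far, is a direct morphic construction: start from a Thue sequence over $\{0,1,2\}$ and apply a uniform substitution $\sigma$ into blocks of length $L$ over $\Sigma$, chosen (possibly with computer assistance) so that the block structure eliminates small-period repetitions on every slope $d \le k$ while the Thue property of the seed eliminates large-period ones.
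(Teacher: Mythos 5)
Your proposal does not close the gap it opens. The LLL computation you sketch is sound as far as it goes, but by your own admission it only reaches $2k + O(\sqrt{k})$ --- which is exactly the bound of Grytczuk, Kozik and Witkowski that this theorem is meant to improve upon. The entire content of the theorem is the removal of the $O(\sqrt{k})$ term, and that is precisely the step you defer: you assert that a Moser--Tardos entropy-compression encoding ``with essentially $\log_2|\Sigma|$ bits per erased symbol plus only an additive constant per rollback'' would land on $2k$, but you do not exhibit such an encoding, and there is no evidence it exists. The obstruction is real: each rollback must record the slope $d \le k$ and the period $t$ of the erased repetition, and amortizing the $\log_2 k$ bits for the slope against the erased symbols is exactly what produces the $O(\sqrt{k})$ excess in the known analyses. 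Flagging this as ``the delicate combinatorial task'' and offering a fallback is a research plan, not a proof; the fallback itself (an unspecified, possibly computer-assisted uniform morphism) is likewise not carried out, since no morphism, block length, or verification argument is given.

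For comparison: the paper does not prove this statement either --- it is quoted from Kranjc et al.~(2015) --- but it records the essential feature of that proof, namely that it is \emph{constructive}, producing explicit $k$-Thue sequences of any given length. This is consistent with the broader picture: the probabilistic machinery you invoke is known to stall at $2k + O(\sqrt{k})$, and the improvement to $2k$ was obtained by an explicit construction, not by tightening the random argument. If you want to pursue your fallback seriously, the techniques in Sections~3 and~4 of this paper (uniform morphisms applied to $\paren{\tfrac74}^+$-free ternary sequences, and wreathing a structured Thue sequence with a circular sequence) are the kind of constructive arguments that actually deliver bounds of this sharpness.
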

\noindent The proof of the above is constructive and provides $k$-Thue sequences of given lengths.

The aim of this paper is two-fold. The main contribution is answering Conjecture~\ref{conj:k+2} in affirmative
for several additional values of $k$.
\begin{theorem}
	\label{thm:45678}
	For any $k \in \set{4,5,6,7,8}$, $k+2$ distinct symbols suffice to construct a $k$-Thue sequence of any length.
\end{theorem}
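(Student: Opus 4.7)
The plan is to construct, for each $k \in \{4,5,6,7,8\}$, an explicit infinite sequence over $k+2$ symbols in which every $d$-subsequence with $1 \le d \le k$ is non-repetitive. Theorem~\ref{thm:2k} already supplies a baseline construction with $2k$ symbols, but the slack of $k-2$ symbols it relies on is too generous, so a substantially tighter argument is required. The two techniques announced in the abstract presumably come in two flavours: a morphic (substitutive) construction whose fixed point can be analysed symbolically, and a letter-by-letter extension with fixing blocks in the spirit of Currie and Pierce, in which the admissibility of each extension is reduced to a finite combinatorial check.

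For the morphic approach, I would search computationally for a $q$-uniform morphism $\mu$ on the alphabet $\Sigma = \{0, 1, \dots, k+1\}$ whose iterate $\mu^\omega(a)$ is $k$-Thue, restricting the search to morphisms that commute with a convenient group of symbol permutations so that the candidate pool remains manageable. The key structural observation is that every $d$-subsequence of $\mu^\omega(a)$ admits a self-similar description: writing an index as $i + jd$ and decomposing $j$ in base $q$ shows that the $d$-subsequence is the image, under a derived morphism of length $\mathrm{lcm}(q,d)/d$, of an arithmetic subsequence of the original word. Classical repetition-testing lemmas (of Crochemore--Mignosi--Restivo type) then reduce non-repetitiveness of each $d$-subsequence to checking repetitions of bounded period inside a sufficiently long prefix $\mu^N(a)$, which is amenable to computer verification. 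For the extension approach, I would maintain a finite precomputed table of admissible $k$-Thue factors of some bounded length, extend the sequence one letter at a time, and replace a bounded suffix by another valid suffix from the table whenever no symbol among the $k+2$ available fits; the existence of this table would be established by preprocessing.

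The main obstacle I expect is the verification in either technique. For the morphic construction one must simultaneously control repetitions in all $d$-subsequences for $1 \le d \le k$, and the bound on the minimal period of a putative repetition must be small enough to keep the exhaustive check tractable — particularly for $k = 7, 8$, where the state space grows rapidly in $q$. This is where the design freedom in choosing $\mu$ is mostly consumed, and where I expect the bulk of the paper's technical effort to lie. For the extension approach the analogous difficulty is establishing a uniform upper bound on the size of the fixing blocks, without which termination is not guaranteed. In both cases the heart of the argument is a boundedness lemma of this type, while the remaining steps (symmetry reduction, exhibiting $\mu$ explicitly, enumerating admissible patterns) are comparatively routine.
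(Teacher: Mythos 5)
Your plan shares the general shape of the paper's first technique --- a computer-found uniform morphism plus a finite verification --- but it diverges exactly where the real content lies, and there it has a genuine gap. You propose to iterate a morphism on the $(k+2)$-letter alphabet and to analyse $d$-subsequences of its fixed point via a ``derived morphism'' self-similarity. Two problems. First, the self-similar description is not what you state: grouping the $d$-subsequence into chunks of $q/\gcd(q,d) = \mathrm{lcm}(q,d)/d$ terms, each chunk is a function of a window of $d/\gcd(q,d)$ (sometimes one more) \emph{consecutive} letters of the underlying word, so the $d$-subsequence is a morphic image of a sliding-block coding of the word, not of an arithmetic subsequence of it; the clean recursion you want to induct on is not there. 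Second, and more fundamentally, even with a correct synchronization lemma, a long repetition in a $d$-subsequence of a morphic image does not pull back to a repetition in the preimage: because the first and last blocks of the covering sequence are only partially visible, it pulls back to a \emph{gapped} repetition, i.e.\ a factor $uvu$ with $|v|$ small but positive. Neither square-freeness nor $k$-Thueness of the underlying word excludes such factors, so your proposed reduction ``to checking repetitions of bounded period inside a sufficiently long prefix'' is unsound as stated; the avoidance property you would actually need of the fixed point is strictly stronger than the one you are constructing it to have, and your plan supplies no way to certify it.

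The paper resolves precisely this difficulty with an ingredient your proposal never identifies: each $\mu_k$ is defined on a \emph{three}-letter domain and applied once (not iterated) to a $\paren{\tfrac74}^+$-free word over $3$ letters, which exists in every length by Dejean's theorem. The computer checks are then (i) $\mu_k(\varphi)$ is $k$-Thue for every non-repetitive $\varphi$ of length at most $40$, and (ii) a synchronization property: any $d$-subsequence of $\mu_k(x_1x_2x_3x_4)$ meeting both $\mu_k(x_1)$ and $\mu_k(x_4)$ determines $x_2x_3$. Consequently a long repetition in a $d$-subsequence forces a factor $uvu$ with $|v|\le 2$ in the source word; for $|u|\ge 7$ its exponent exceeds $\tfrac74$, impossible in a Dejean word, and for $|u|\le 6$ the source has length at most $18$, a case covered by check (i). Dejean's theorem is exactly the tool that kills the gapped repetitions your scheme cannot handle, and it is available only because the domain alphabet is kept at $3$ letters. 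Finally, your fallback --- letter-by-letter extension with fixing blocks in the spirit of Currie and Pierce --- is the method the paper notes was already attempted for $k=4$ without settling it; the paper's actual second technique is entirely different (wreathing a morphic Thue sequence with a circular sequence), and it is used only for $k=4$ and $k=6$.
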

Moreover, we present two different techniques of proving the above theorem.
In the former, described in Section~\ref{sec:pas}, we use exhaustive computer search to determine morphisms 
for each $k$, $k \in \set{4,5,6,7,8}$, from which we construct $k$-Thue sequences.
In the latter, described in Section~\ref{sec:cons}, we use concatenation of special blocks given by another morphism. 
The purpose of the latter one is to introduce its ability to deal with larger $k$'s,  
therefore we only prove the cases $k=4$ and $k=6$. 
We believe, in the future, it could be used for proving Conjecture~\ref{conj:k+2} for infinitely many values of $k$.

\section{Preliminaries}
\label{sec:prel}

In this section, we introduce additional terminology and notation used in the paper.
Throughout the paper, $i$ and $t$ are used to determine positive integers, unless more details are given.

An \textit{$\mathbb{A}$-sequence} (or simply a \textit{sequence} when the alphabet is known from the context or not relevant) 
of length $t$ is an ordered tuple of $t$ symbols from some alphabet $\mathbb{A}$.
Let $\varphi = x_1 \dots x_t$ be a sequence. 
A subsequence of $\varphi$ of consecutive terms $x_i \dots x_{j}$,
for some $i,j$, $1 \le i \le j \le t$, is denoted by $\varphi(i,j)$.
A \textit{term} indicates an element of a sequence at a specified index.
A \textit{block} is a subsequence of consecutive terms of some sequence.
When we refer to a term as a term of a block, by its index we mean the index of a term in the block.
We denote the term at index $i$ in a sequence $\varphi$ (resp. a block $\beta$)
by $\varphi(i)$ (resp. $\beta(i)$).

A \textit{prefix} of a sequence $\varphi = x_1\dots x_r$ is a sequence $\pi = x_1\dots x_s$, for some integer $s \le r$.
A \textit{suf\mbox{}f\mbox{}ix} is defined analogously.
In a sequence $\varphi$ consider a pair of sequences $\pi$ and $\varepsilon$ such that $\pi \varepsilon$ 
is a subsequence of $\varphi$, $\pi$ has length at least $1$, and $\varepsilon$ is a prefix of $\pi \varepsilon$.
The \emph{exponent} of $\pi \varepsilon$ is 
$$
	\exp(\pi \varepsilon) = \tfrac{|\pi \varepsilon|}{|\pi|}.
$$
If a sequence has exponent $p$, we call it a \textit{$p$-repetition}. 
A sequence is \textit{$q^+$-free} if it contains no $p$-repetition such that $p>q$.
For sequences over $3$-letter alphabets, Dejean~\cite{Dej72} proved the following.
\begin{theorem}[Dejean, 1972]
	\label{thm:dej}
	Over $3$-letter alphabets there exist $\frac{7}{4}^+$-free sequences of arbitrary lengths.
\end{theorem}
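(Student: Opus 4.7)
The plan is to exhibit an explicit infinite $\frac{7}{4}^+$-free word over a three-letter alphabet as the fixed point of a carefully chosen morphism; every prefix of such a word then gives a finite $\frac{7}{4}^+$-free sequence of the required length. Following Dejean's original strategy, I would introduce a uniform morphism $h : \{0,1,2\}^* \to \{0,1,2\}^*$ (Dejean herself used a $19$-uniform one) with the property that $h(0)$ begins with $0$, so that the sequence of iterates $h^n(0)$ is nested and converges to an infinite word $\omega = \lim_{n \to \infty} h^n(0)$ that is a fixed point of $h$.

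The core step is to show that $\omega$ is $\frac{7}{4}^+$-free. The standard technique is a lifting/descent argument: assuming $\omega$ contains a $\frac{7}{4}^+$-repetition $\pi\varepsilon$ with $|\pi|$ sufficiently large, one uses the fact that every factor of $\omega$ decomposes essentially uniquely as an image $h(u)$ (up to a bounded prefix/suffix shift) to extract a strictly shorter $\frac{7}{4}^+$-repetition inside a preimage, contradicting minimality. This reduces the problem to verifying that $\omega$ contains no $\frac{7}{4}^+$-repetition $\pi\varepsilon$ with $|\pi| \le N$ for a fixed threshold $N$ depending on $h$, which is a finite check: it suffices to inspect all factors of $\omega$ (equivalently, all factors of $h^m(0)$ for a small $m$) of length up to $\tfrac{7}{4}\cdot N + O(1)$.

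The main obstacle is exhibiting a morphism $h$ for which the lifting argument actually succeeds. Two properties must be arranged simultaneously: (i)~synchronization --- no letter image is a factor of the concatenation of two others in a nontrivial way, so that the block decomposition of factors of $\omega$ is quasi-unique, letting a large repetition descend to a smaller one; and (ii)~avoidance --- the images $h(0), h(1), h(2)$ and their concatenations must themselves be $\frac{7}{4}^+$-free, which is a delicate constraint since $\frac{7}{4}$ is the critical exponent over a ternary alphabet and any naive morphism will violate it. Finding such $h$ requires an explicit construction (or a computer search), which is precisely what Dejean carried out.

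Once a suitable $h$ is in hand, the proof is straightforward: one verifies the finite base case on $h^m(0)$ for the required $m$, invokes the lifting step, concludes that $\omega$ is $\frac{7}{4}^+$-free, and takes prefixes of $\omega$ to obtain $\frac{7}{4}^+$-free sequences of any prescribed length.
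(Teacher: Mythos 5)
The paper itself contains no proof of this statement: Theorem~\ref{thm:dej} is quoted from Dejean's 1972 paper and used as a black box in Section~\ref{sec:pas}, where the morphisms $\mu_k$ are applied to an arbitrary $\paren{\tfrac74}^+$-free ternary sequence whose existence is exactly this theorem. So there is no internal argument to compare against; your proposal has to stand on its own as a reconstruction of Dejean's proof.

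Judged that way, it has a genuine gap, and you name it yourself: the morphism $h$ is never exhibited. Your outline --- fixed point of a uniform morphism, descent of a long $\paren{\tfrac74}^+$-repetition to a shorter one in a preimage via synchronization, a finite check for short repetitions, prefixes giving all lengths --- is indeed the shape of Dejean's argument (her morphism is $19$-uniform, with the images of the second and third letters obtained from the first by cyclically permuting the alphabet). But the entire mathematical content of the theorem lives in producing a concrete $h$ and verifying your properties (i) and (ii) for it; writing that finding such an $h$ ``is precisely what Dejean carried out'' defers the substance of the proof to the very reference being proved, which is circular. Moreover, even granted a candidate $h$, the descent step for fractional exponents is more delicate than your sketch suggests: one must show that a repetition $\pi\varepsilon$ with $\exp(\pi\varepsilon) > \tfrac74$ and $|\pi|$ large forces the block decompositions of the two occurrences to align, and then that the lifted repetition still has exponent exceeding $\tfrac74$ after the boundary adjustments --- the choice of the threshold $N$ and the inequality relating the lifted period to the original are exactly the bookkeeping that needs to be written down and is absent. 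As it stands, the proposal is a correct road map of Dejean's proof, not a proof.
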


A \textit{morphism} is a mapping $\mu$ which assigns to each symbol of an alphabet a sequence. 
Applied to a sequence $\varphi$, $\mu(\varphi)$ is the sequence obtained from $\varphi$ where every
symbol is replaced by its image according to $\mu$. We say that a morphism is \textit{$k$-uniform} if it maps 
every symbol from the domain to some sequence of length $k$.

Given a sequence $\varphi = \beta_1 \dots \beta_t$ comprised of blocks $\beta_i$, for $1 \le i \le t$,
the \textit{covering subsequence} $\hat{\sigma}$ of a subsequence $\sigma$ in $\varphi$ 
is the subsequence $\varphi(i,j)$, where $i$ is the index of the first term of the block containing the first term of $\sigma$, 
and $j$ is the index of the last term of the block containing the last term of $\sigma$.

An \textit{$i$-shift} of $\varphi$ is the sequence $\varphi^i = x_{i+1} \dots x_{\ell} x_{1} \dots x_{i}$, i.e. the sequence $\varphi$ 
with the subsequence of the first $i$ elements moved to the end.
Let $\varphi$ be a sequence of length $\ell$. 
We define the \textit{circular sequence} $\cir{\ell}{t}$ of order $\ell$ and length $\ell^2 \cdot t$ as
$$
	\cir{\ell}{t} = \underbrace{\varphi^0 \varphi^1 \dots \varphi^{\ell-1} \ \dots \ \varphi^0 \varphi^1 \dots \varphi^{\ell-1}}_{t}.
$$
We call each subsequence $\varphi^i$ of $\cir{\ell}{t}$ a \textit{$\zeta$-block}.

Apart from concatenation of sequences, we define another sequence combining operation. 
Let $\varphi_1$ and $\varphi_2$ be sequences of lengths $\ell \cdot t$.
A \textit{sequence wreathing of order $\ell$} of $\varphi_1$ and $\varphi_2$, denoted by $\varphi_1 \wreath_\ell \varphi_2$, 
is consecutive concatenation of $k$ subsequent elements of $\varphi_1$ and $\varphi_2$, i.e.
$$
	\varphi_1 \wreath_\ell \varphi_2 = \varphi_1(1,\ell) \ \varphi_2(1,\ell) \dots \varphi_1((t-1)\ell + 1, t \cdot \ell) \ \varphi_2((t-1)\ell+1, t \cdot \ell)\,.
$$
We call the sequences $\varphi_1$ and $\varphi_2$ the \textit{base} and the \textit{wrap} of sequence wrapping $\varphi_1 \wreath_\ell \varphi_2$, respectively.
Additionally, the blocks $\varphi_1(i\ell + 1, (i+1)\ell)$ and $\varphi_2(i\ell + 1, (i+1)\ell)$ are respectively called a \textit{base-block} and a \textit{wrap-block}.

We conclude this section with two lemmas we will use in the forthcoming sections.
The former, due to Currie~\cite{Cur91}, 
states that insertion of non-repetitive subsequences (over distinct alphabets) into a non-repetitive sequence preserves non-repetitiveness.
\begin{lemma}[Currie, 1991]
	\label{lem:insert}
	Let $\varphi_0 = x_1\dots x_t$ be a non-repetitive $\mathbb{A}$-sequence, 
	and $\varphi_1,\dots \varphi_{t+1}$ be non-repetitive $\mathbb{B}$-sequences,
	where $\mathbb{A}$, $\mathbb{B}$ are disjoint alphabets. 
	Additionally, the length of any $\varphi_i$, $1 \le i \le t+1$, may be $0$.
	Then, the sequence $\varphi_1 x_1 \dots \varphi_t x_t \varphi_{t+1}$ is non-repetitive.
\end{lemma}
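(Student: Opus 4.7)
The plan is to argue by contradiction: suppose the concatenated sequence $\Phi = \varphi_1 x_1 \varphi_2 x_2 \dots \varphi_t x_t \varphi_{t+1}$ contains a repetition $\rho_1 \rho_2$ with $\rho_1 = \rho_2$, and derive a repetition either inside some $\varphi_i$ or inside $\varphi_0$. The disjointness of $\mathbb{A}$ and $\mathbb{B}$ is the only structural fact I need: it lets me decide, position by position, whether a symbol of $\Phi$ came from $\varphi_0$ or from one of the inserted blocks, simply by looking at which alphabet it belongs to.

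First I would split into cases according to whether $\rho_1 \rho_2$ contains any $\mathbb{A}$-symbol. If it contains none, then the whole repetition consists of $\mathbb{B}$-symbols, and since in $\Phi$ every maximal run of consecutive $\mathbb{B}$-symbols lies within a single $\varphi_i$ (consecutive $\varphi_i$'s are separated by a single $\mathbb{A}$-symbol $x_i$), the repetition lies inside some $\varphi_i$, contradicting that $\varphi_i$ is non-repetitive.

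Next I would handle the case where $\rho_1 \rho_2$ contains at least one $\mathbb{A}$-symbol. Writing $\rho_1 = y_1 \dots y_s$ and $\rho_2 = y_{s+1} \dots y_{2s}$ with $y_j = y_{s+j}$, the index set $I = \{j : y_j \in \mathbb{A}\}$ is invariant under $j \mapsto j+s$ (mod the block), so it contains the same (positive) number $m$ of indices in $[1,s]$ as in $[s+1,2s]$. Extracting the $\mathbb{A}$-symbols of $\rho_1$ in order gives a word $z_1 \dots z_m$, and those of $\rho_2$ give $z_{m+1} \dots z_{2m}$, with $z_j = z_{m+j}$ coming from the equality $\rho_1 = \rho_2$. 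The crucial observation is that the $\mathbb{A}$-symbols of $\Phi$ appear in exactly the order $x_1, x_2, \dots, x_t$, so $z_1 \dots z_{2m}$ is a contiguous subword $x_a x_{a+1} \dots x_{a+2m-1}$ of $\varphi_0$; this word is then a repetition of length $2m \ge 2$ in $\varphi_0$, contradicting its non-repetitiveness.

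I do not anticipate a serious obstacle: the only thing to verify carefully is that the $\mathbb{A}$-symbols contained in a contiguous subword of $\Phi$ always form a contiguous subword of $\varphi_0$, which is immediate from the way $\Phi$ was built. Empty $\varphi_i$'s require no special treatment because the argument reads off alphabet membership one position at a time and does not care whether two consecutive $\mathbb{A}$-symbols of $\Phi$ are adjacent or separated by a non-empty $\mathbb{B}$-block.
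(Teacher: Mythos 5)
Your proof is correct. Note that the paper itself does not prove this lemma at all: it is quoted as a known result of Currie (1991), so there is no internal proof to compare against. Your argument is the standard self-contained one and it holds up: the disjointness of $\mathbb{A}$ and $\mathbb{B}$ makes alphabet membership a position-wise invariant of the repetition, so the set of $\mathbb{A}$-positions in $\rho_1\rho_2$ is closed under the shift by $s$; this gives both that the two halves contain the same number $m$ of $\mathbb{A}$-symbols and that corresponding $\mathbb{A}$-symbols are equal. Combined with the observation that the $\mathbb{A}$-symbols of any contiguous window of $\Phi$ form a contiguous factor of $\varphi_0$ (immediate from the construction of $\Phi$), the case $m\ge 1$ projects the repetition onto a repetition $x_a\dots x_{a+2m-1}$ in $\varphi_0$, while the case $m=0$ traps the repetition inside a single $\varphi_i$ because distinct inserted blocks are separated by at least one $\mathbb{A}$-symbol. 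Both cases contradict the hypotheses, and empty blocks indeed require no special handling. The one point worth stating explicitly in a polished write-up is the sentence you flagged yourself: that a contiguous window of $\Phi$ meets a contiguous set of the $x_i$'s; it is immediate, but it is the hinge of the whole argument.
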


Proving that a non-repetitive sequence $\varphi$ is $k$-Thue for some integer $k > 1$, 
one needs to show that every $\ell$-subsequence of $\varphi$ is non-repetitive for every integer $\ell$, $1 \le \ell \le k$.
To prove that an $\ell$-subsequence is non-repetitive, it suffices to have enough information about $\varphi$ as we show in the next lemma.
Let $\varphi = \beta_1 \dots \beta_t$ be a sequence comprised of blocks $\beta_i$, $1 \le i \le t$.
We say that a block $\beta_i$ is \textit{uniquely determined by a subset of terms} if there is no block $\beta_j$, $\beta_i \ne \beta_j$,
having the same terms at the same positions.
E.g., from the construction of circular sequences, we have the following.
\begin{observation}
	\label{obs:circular}
	A $\zeta$-block $\varphi^{i}$, $0 \le i \le \ell-1$, is uniquely determined by one term, i.e.,
	given at least one term of a $\varphi^{i}$, one can determine $i$.
\end{observation}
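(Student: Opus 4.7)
The plan is to invoke the explicit formula for the cyclic shift $\varphi^i$ together with the natural, implicitly assumed hypothesis that the base sequence $\varphi = x_1 \dots x_\ell$ used to construct $\cir{\ell}{t}$ consists of $\ell$ pairwise distinct symbols. Without such an assumption the statement would fail in general—any nontrivial period of $\varphi$ would let two different shifts agree at every position—so I would begin the proof by recording this distinctness assumption explicitly.

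Next I would unpack what ``one term of $\varphi^i$'' means in the sense of the definition of ``uniquely determined by a subset of terms'' that precedes the observation, namely a pair $(j, v)$ where $j \in \{1,\dots,\ell\}$ is the position inside the block and $v$ is its value. The definition $\varphi^i = x_{i+1} \dots x_\ell x_1 \dots x_i$ yields the closed form $\varphi^i(j) = x_m$ with $m \equiv i + j \pmod{\ell}$, taking $m \in \{1,\dots,\ell\}$.

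Finally, the distinctness of $x_1, \dots, x_\ell$ lets me invert: the value $v$ determines a unique $m$ with $v = x_m$, and then $i \equiv m - j \pmod{\ell}$ pins down a unique $i \in \{0, 1, \dots, \ell-1\}$. This gives the conclusion of the observation.

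The only genuine obstacle I foresee is making sure the distinctness of symbols in $\varphi$ is consistent with every later use of circular sequences in Section~\ref{sec:cons}; once that hypothesis is in place (or noted as a convention), the proof collapses to the one-line modular inversion above, and in particular no structural property of $\varphi$ beyond ``all $\ell$ entries are distinct'' is needed.
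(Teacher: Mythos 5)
Your proposal is correct and takes essentially the same route as the paper, which states this observation without proof as an immediate consequence of the construction of circular sequences; your closed form $\varphi^i(j) = x_m$ with $m \equiv i+j \pmod{\ell}$ and the inversion $i \equiv m - j \pmod{\ell}$ is exactly that consequence made explicit. Your point about the implicit hypothesis that the $\ell$ symbols of $\varphi$ are pairwise distinct is also well taken: the paper never states it, but it holds in both of its uses of circular sequences ($\cir{3}{3^{t-1}}$ over $\set{4,5,6}$ and $\cir{4}{4^{t-1}}$ with $\varphi = 5\ 6\ 7\ 8$).
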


We use the following lemma as a tool for proving that some $d$-subsequence of a Thue sequence does not contain a repetition.
\begin{lemma}
	\label{lem:deter}	
	Let $\sigma$ be an $\ell$-subsequence of a sequence $\varphi = \beta_1 \beta_2 \dots \beta_t$, for some positive integers $\ell$ and $t$. 
	Let $\rho_1 \rho_2$ be a repetition in $\sigma$, and let, for some $j$, $\gamma_1 = \beta_{j+1} \dots \beta_{j+r}$, $\gamma_2 = \beta_{j+r+1} \dots \beta_{j+2r}$
	be the covering sequences of $\rho_1$ and $\rho_2$, respectively.	
	If it holds that
	\begin{itemize}
		\item{} the terms of $\rho_1$ uniquely determine the blocks $\beta_{i}$, for $i \in \set{j+1,j+r}$;
		\item{} the terms of $\rho_2$ uniquely determine the blocks $\beta_{i}$, for $i \in \set{j+r+1,j+2r}$;
		\item{} all the terms of $\rho_1$, $\rho_2$ appear in $\gamma_1$, $\gamma_2$ at the same indices within their blocks, respectively;
	\end{itemize}
	then $\gamma_1 \gamma_2$ is a repetition in $\varphi$.
\end{lemma}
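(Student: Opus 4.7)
My plan is to show that $\gamma_1$ and $\gamma_2$ agree symbol by symbol, from which it is immediate that $\gamma_1\gamma_2$ is a repetition in $\varphi$. The starting point is the trivial identity $\rho_1 = \rho_2$ inherited from the fact that $\rho_1\rho_2$ is a repetition in $\sigma$; the heart of the proof is then to lift this equality from the subsequence level back to the block level of $\varphi$.

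First I would set up a natural pairing between the blocks of $\gamma_1$ and those of $\gamma_2$. Consecutive terms of $\sigma$ lie exactly $\ell$ positions apart in $\varphi$, so consecutive terms of $\rho_1$ and of $\rho_2$ are $\ell$ apart as well. Combined with the third hypothesis, this pairs the block $\beta_{j+s}$ of $\gamma_1$ with the block $\beta_{j+r+s}$ of $\gamma_2$ for every $1 \le s \le r$. Under this pairing, the terms of $\rho_1$ that land in $\beta_{j+s}$ occupy exactly the same within-block indices as the terms of $\rho_2$ that land in $\beta_{j+r+s}$; and because $\rho_1 = \rho_2$, the values at these paired indices agree too.

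For the two endpoint pairs I would then invoke the first and second hypotheses. The collection of (within-block index, value) pairs coming from the terms of $\rho_1$ inside $\beta_{j+1}$ constitutes a set of constraints that, by the first hypothesis, is satisfied by only one block of $\varphi$, namely $\beta_{j+1}$ itself; since $\beta_{j+r+1}$ satisfies the identical constraints through the pairing, $\beta_{j+1} = \beta_{j+r+1}$ follows. A symmetric argument using the second hypothesis yields $\beta_{j+r} = \beta_{j+2r}$.

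The main obstacle, and the step I expect to require the most care, is handling the intermediate blocks $\beta_{j+1+s}$ with $1 \le s \le r-2$, which the stated uniqueness hypotheses do not address directly. Here I would rely on the fact that these intermediate blocks are fully contained in $\gamma_1$, so the set of within-block indices hit by $\rho_1$ in each of them is at least as rich as the set hit in the two boundary blocks; combined with the uniform stride $\ell$, the same unique-determination reasoning then carries over and forces $\beta_{j+1+s} = \beta_{j+r+1+s}$ for every such $s$. Making this transfer of uniqueness from the boundary to the interior precise — essentially showing that a hit pattern refining a uniqueness-determining pattern still singles out a unique block — is the delicate bookkeeping I would expect to dominate the write-up.
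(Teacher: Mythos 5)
Your overall strategy (pair $\beta_{j+s}$ with $\beta_{j+r+s}$, transport the equality $\rho_1=\rho_2$ through the third hypothesis, and invoke unique determination block by block) is exactly the paper's argument, but you have misread the uniqueness hypotheses, and the patch you propose for the resulting "obstacle" does not work. In the lemma, $i \in \set{j+1,j+r}$ is the paper's (admittedly sloppy) shorthand for the whole range $j+1, j+2, \dots, j+r$, not the two-element set consisting of the first and last block indices: the paper's own proof begins "Since \emph{all} the blocks are uniquely determined\dots", and every application of the lemma (e.g.\ the case $d=2$ in Lemma~\ref{lem:4thue}) first verifies that \emph{every} block met by $\rho_1$ and $\rho_2$ is uniquely determined. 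Under that reading there is no "intermediate block" problem at all: for each $s$, the terms of $\rho_1$ lying in $\beta_{j+s}$ and the terms of $\rho_2$ lying in $\beta_{j+r+s}$ occupy the same within-block positions and carry the same values, so unique determination gives $\beta_{j+s}=\beta_{j+r+s}$ directly --- your "endpoint" argument, applied verbatim to every pair, is the entire proof.

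The gap is in your third paragraph. Under your literal two-element reading the lemma is simply false, so no bookkeeping can rescue it: unique determination, as defined in Section~\ref{sec:prel}, is a property of the \emph{values} sitting at the hit positions ("no other block has the same terms at the same positions"), not of the positions alone. That the boundary block $\beta_{j+1}$ is singled out by its values at certain positions says nothing about an interior block $\beta_{j+1+s}$, whose values at those (or even at more) positions may well be shared by a second, distinct block of $\varphi$; in that case $\gamma_1$ and $\gamma_2$ can disagree in an interior block while $\rho_1\rho_2$ is still a repetition in $\sigma$ and both endpoint pairs coincide. So the claim "a hit pattern refining a uniqueness-determining pattern still singles out a unique block" is not true in the paper's sense of "uniquely determined", and the transfer from boundary to interior cannot be made. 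The fix is not more bookkeeping but the correct reading of the hypotheses, after which your own argument closes the proof in one step.
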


\begin{proof}	
	Since all the blocks are uniquely determined and $r > 0$, it follows that $\beta_{j + i} = \beta_{j + r + i}$ for every $j$.
\end{proof}

\section{Technique \#1: Exhaustive Search for Morphisms}
\label{sec:pas}

The aim of this section is to present a compact proof of Theorem~\ref{thm:45678}.
For completeness, in the proof, we provide constructions of $k$-Thue sequences also for $k \in \set{2,3}$.
We used an exhaustive computer search to determine appropriate morphisms which are then applied to appropriate sequences.

\begin{proof}[Proof of Theorem~\ref{thm:45678}]
Let $\mathbb{A}_3$ and $\mathbb{A}_{k+2}$ be alphabets on $3$ and $k+2$ letters, respectively.
For every $k$, $2 \le k \le 8$, let a morphism $\mu_k:\mathbb{A}_3^* \to \mathbb{A}_{k+2}^*$ be defined as given below:
\begin{itemize}
	\item{} $k=2$: a $7$-uniform morphism 
		$$
		\begin{array}{c}
			\mu_2(\texttt{0})=\texttt{0310213}\\
			\mu_2(\texttt{1})=\texttt{0230132}\\
			\mu_2(\texttt{2})=\texttt{0120321}\\
		\end{array}
		$$
	\item{} $k=3$: a $14$-uniform morphism 
		$$
		\begin{array}{c}
			\mu_3(\texttt{0})=\texttt{10231402310243}\\
			\mu_3(\texttt{1})=\texttt{01243024130243}\\
			\mu_3(\texttt{2})=\texttt{01240312401234}\\
		\end{array}
		$$
	\item{} $k=4$: a $12$-uniform morphism 
		$$
		\begin{array}{c}
			\mu_4(\texttt{0})=\texttt{012350412534}\\
			\mu_4(\texttt{1})=\texttt{012345103245}\\
			\mu_4(\texttt{2})=\texttt{012340521345}\\
		\end{array}
		$$
	\item{} $k=5$: a $27$-uniform morphism 
		$$
		\begin{array}{c}
			\mu_5(\texttt{0})=\texttt{012345601235460235146023546}\\
			\mu_5(\texttt{1})=\texttt{012345601234650134625013465}\\
			\mu_5(\texttt{2})=\texttt{012345061234065123460152346}\\
		\end{array}
		$$
	\item{} $k=6$: a $23$-uniform morphism 
		$$
		\begin{array}{c}
			\mu_6(\texttt{0})=\texttt{01234560172436501243756}\\
			\mu_6(\texttt{1})=\texttt{01234560127354061235476}\\
			\mu_6(\texttt{2})=\texttt{01234560123746510324657}\\
		\end{array}
		$$
	\item{} $k=7$: a $36$-uniform morphism 
		$$
		\begin{array}{c}
			\mu_7(\texttt{0})=\texttt{012345670812345608721345687201345678}\\
			\mu_7(\texttt{1})=\texttt{012345670182345601872345618702345687}\\
			\mu_7(\texttt{2})=\texttt{012345670128345670281345762801345768}\\
		\end{array}
		$$
	\item{} $k=8$: a $30$-uniform morphism 
		$$
		\begin{array}{c}
			\mu_8(\texttt{0})=\texttt{012345678902315647890312645789}\\
			\mu_8(\texttt{1})=\texttt{012345678902143675982014365789}\\
			\mu_8(\texttt{2})=\texttt{012345678019324568079123548679}\\
		\end{array}
		$$
\end{itemize}

In what follows, we show that $\mu_k(\varphi')$ is $k$-Thue for every $\paren{\tfrac74}^+$-free sequence $\varphi' \in \mathbb{A}_3^*$.
Using a computer, we have verified the following.
\begin{claim}
	\label{cl:morf}
	Let $\varphi$ be any non-repetitive sequence over $\mathbb{A}_3$ of length at most $40$.
	For each morphism $\mu_k$, $\mu_k(\varphi)$ is $k$-Thue.
\end{claim}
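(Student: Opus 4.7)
The claim is a finite computational statement: the set of non-repetitive ternary words of length at most $40$ is finite, and for each of them the candidate image $\mu_k(\varphi)$ and its $d$-subsequences with $1 \le d \le k$ are all finite strings. The plan is therefore to verify it by exhaustive search and then treat the claim as a black-box lemma in the remainder of the proof of Theorem~\ref{thm:45678}.

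The enumeration would proceed by depth-first backtracking: maintain a non-repetitive prefix $\varphi$ of length at most $40$, attempt to extend it by each symbol of $\mathbb{A}_3$ in turn, and accept the extension iff no repetition is created. Since any fresh repetition in $\varphi(1,n+1)$ must end at position $n+1$, this test reduces to checking, for each $t$ with $2t \le n+1$, whether $\varphi(n+2-2t,n+1-t) = \varphi(n+2-t,n+1)$. For every accepted $\varphi$ and each of the seven morphisms $\mu_k$ listed above, I would then compute $\mu_k(\varphi)$ (a word of length at most $40 \cdot 36 = 1440$) and test the $k$-Thue property by, for every $d \in \{1,\ldots,k\}$ and every starting residue modulo $d$, extracting the corresponding $d$-subsequence and running the same repetition-end scan. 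If no counterexample arises, the claim holds; the procedure is rerun for each $k \in \{2,\ldots,8\}$ independently.

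The main obstacle is efficiency rather than correctness: the number of non-repetitive ternary words of length $40$ grows exponentially in $40$, and each such word generates up to seven images to scan. To keep the search tractable, work should be shared across the backtracking tree --- in particular, the $k$-Thue scan on $\mu_k(\varphi)$ should be maintained \emph{incrementally} as $\varphi$ grows by one symbol, inspecting only those repetitions in the extended $d$-subsequences that involve the newly appended block $\mu_k(x_{n+1})$. Conceptually, the one delicate point is justifying the cut-off at length $40$: one needs to know that whenever a repetition appears in some $d$-subsequence of $\mu_k(\varphi)$ for a longer ternary word, it is already witnessed by a repetition inside a length-$40$ factor of $\varphi$, via the block-uniqueness mechanism of Lemma~\ref{lem:deter} applied to the uniform morphism $\mu_k$. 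This reduction is what will let the finite computer check settle the case of arbitrary-length $k$-Thue sequences in the argument that follows.
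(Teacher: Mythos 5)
Your proposal takes essentially the same approach as the paper: the paper's entire proof of Claim~\ref{cl:morf} is a computer verification (``Using a computer, we have verified the following''), i.e., precisely the finite exhaustive check over all non-repetitive ternary words of length at most $40$ that you describe. Your closing remarks about justifying the length cut-off pertain to the surrounding proof of Theorem~\ref{thm:45678} (where the paper reduces long repetitions to factors $uvu$ and invokes $\paren{\tfrac74}^+$-freeness), not to the claim itself, but this does not affect the correctness of your verification plan for the claim.
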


Next, for every $k$ and $d$ such that $2 \le k \le 8$ and $1 \le d \le k$,
we consider every sequence $\delta = x_1x_2x_3x_4$ of length $4$ over $\mathbb{A}_3$ 
and every $d$-subsequence $\sigma$ of $\mu_k(\delta)$ such that $\sigma$ intersects both the prefix $\mu_k(x_1)$ and the suf\mbox{}f\mbox{}ix $\mu_k(x_4)$ of $\mu_k(\delta)$.
We again used a computer to check that if such a $d$-subsequence appears in two sequences $\mu(\delta)$ and $\mu(\delta')$, 
where $\delta=x_1x_2x_3x_4$ and $\delta'=x'_1x'_2x'_3x'_4$, then $x_2x_3=x'_2x'_3$.

Thus, long enough $d$-subsequences of $\mu_k(\varphi)$ allow to determine $\varphi$, except maybe for the first and the last term of $\varphi$.
So, if a large repetition $\rho$ occurs in some $d$-subsequence of $\mu_k(\varphi)$, then $\varphi$ contains a factor $uvu$ such that $u$ is large and $|v|\le 2$.
For $|u| \ge 7$, such a factor $uvu$ cannot appear in a $\paren{\tfrac74}^+$-free sequence. 
On the other hand, if $|u| \le 6$, then the length of $\varphi$ is at most $18$ (including possible first and last term). 
For such sequences, $\mu_k(\varphi)$ are $k$-Thue by Claim~\ref{cl:morf}. 
This completes the proof.
\end{proof}

\section{Construction of Thue sequences using Hexagonal Morphism}
\label{sec:koc}

Recently, in his master thesis, Ko\v{c}i\v{s}ko~\cite{Koc13} introduced a uniform morphism $\kappa$, 
which maps a term $x$ of a sequence to a block of three symbols regarding the mapping of the predecessor of $x$.
In particular, instead of using an alphabet $\mathbb{A} = \set{1,2,3}$ an auxiliary alphabet 
$$
	\overline{\mathbb{A}} = \set{\overline{1}, \underline{1}, \overline{2}, \underline{2}, \overline{3}, \underline{3}}
$$ 
is used. The morphism $\kappa$ is then defined as
\begin{eqnarray*}
	\kappa(\overline{1}) = \overline{1} \ \underline{2} \ \overline{3}\,, 
		\quad \quad 
	\kappa(\overline{2}) = \overline{2} \ \underline{3} \ \overline{1}\,, 	
		\quad \quad 
	\kappa(\overline{3}) = \overline{3} \ \underline{1} \ \overline{2}\,, \\
	\kappa(\underline{1}) = \underline{3} \ \overline{2} \ \underline{1}\,, 
		\quad \quad 
	\kappa(\underline{2}) = \underline{1} \ \overline{3} \ \underline{2}\,,
		\quad \quad 
	\kappa(\underline{3}) = \underline{2} \ \overline{1} \ \underline{3}\,.
\end{eqnarray*}
For a positive integer $t$, we recursively define the sequence
$$
	\skap{t} = \kappa(\skap{t-1})\,,
$$
where $\skap{0} = \overline{1}$.
Notice that for every $t$, every symbol from $\overline{\mathbb{A}}$ is a neighbor of at most two symbols of $\overline{\mathbb{A}}$ (if $t > 3$, then precisely two);
we say that neighboring symbols are \textit{adjacent}. 
The adjacency is also preserved between the blocks of three symbols to which the
symbols from $\overline{\mathbb{A}}$ are mapped by $\kappa$; 
we denote these blocks \textit{$\overline{\kappa}$-triples}. 
Due to its structure, we refer to $\kappa$ as the \textit{hexagonal morphism}.
In Fig.~\ref{fig:koci}, the adjacencies between the symbols and the $\overline{\kappa}$-triples,
and the mappings of $\kappa$ are depicted.
\begin{figure}[htp!]
	$$
		\includegraphics{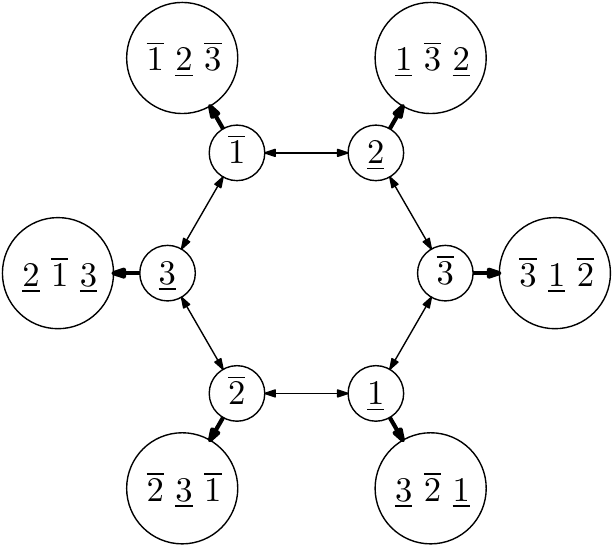}
	$$
	\caption{The graph of adjacencies between the symbols of $\overline{\mathbb{A}}$ and $\overline{\kappa}$-triples, and the mappings defined by $\kappa$.}
	\label{fig:koci}
\end{figure}

Let $\pi \ : \ \overline{\mathbb{A}} \rightarrow \mathbb{A}$ be a projection of symbols from the auxiliary alphabet
$\overline{\mathbb{A}}$ to $\mathbb{A}$ defined as $\pi(\overline{a}) = a$ and $\pi(\underline{a}) = a$, for every $a \in \set{1,2,3}$.
By $\kap{t}$, we denote the projected sequence $\skap{t}$, i.e. $\kap{t} = \pi(\skap{t})$; similarly a projected $\overline{\kappa}$-triple $\tau$, 
$\pi(\tau)$, is referred to as a $\kappa$-block.

By the definition of $\kap{t} = \set{x_i}_{i=1}^{3^t}$ and the mapping $\kappa$, one can easily derive the following basic properties:
\begin{itemize}
	\item[$(K_1)$] \quad For every pair of adjacent $\kappa$-blocks $\tau$ and $\sigma$, the sequence $\tau\sigma$ is Thue.
	\item[$(K_2)$] \quad The length of $\kap{t}$ is $3^t$, and $x_{3i+1}x_{3i+2}x_{3i+3}$ is a $\kappa$-block for every $i$, $0 \le i < 3^{t-1}$.
	\item[$(K_3)$] \quad $\set{x_{3i+1}, x_{3i+2}, x_{3i+3}} = \set{1,2,3}$ for every $i$, $0 \le i < 3^{t-1}$.
	\item[$(K_4)$] \quad $x_{3i+2} \neq x_{3(i+1)+2}$ for every $i$, $0 \le i < 3^{t-1}-1$.
	\item[$(K_5)$] \quad Any three consecutive terms $x_{j+1}x_{j+2}x_{j+3}$ of $\kap{t}$, which do not belong to the same $\kappa$-block,
					uniquely determine the two $\kappa$-blocks they belong to.
	\item[$(K_6)$] \quad For a pair $\tau_1$, $\tau_2$ of adjacent $\kappa$-blocks it holds that the first term of $\tau_1$ is distinct 
					from the third term of $\tau_2$.
	\item[$(K_7)$] \quad If a pair of distinct $\kappa$-blocks has the same first or last term, then they are adjacent.
	\item[$(K_8)$] \quad A pair of adjacent $\kappa$-blocks is not adjacent to any other common $\kappa$-block.
	\item[$(K_9)$] \quad The middle term of the $\kappa$-block $\pi(\kappa(i))$, $i \in \overline{\mathbb{A}}$, equals $\pi(i)+1$ (modulo $3$).
	\item[$(K_{10})$] \quad A pair of distinct $\kappa$-symbols $x_1$ and $x_2$, where $x_1$ and $x_2$ are the first (last) terms of adjacent $\kappa$-blocks $\tau_1$ 
					and $\tau_2$, uniquely determines $\tau_1$ and $\tau_2$.
	\item[$(K_{11})$] \quad A $\kappa$-block $\tau_1$ and at least one term of a $\kappa$-block $\tau_2$ adjacent to $\tau_1$ uniquely determine $\tau_2$.
	\item[$(K_{12})$] \quad A pair of adjacent $\kappa$-blocks is in $\kap{t}$ always separated by an even number of $\kappa$-blocks, since the graph of 
				adjacencies is bipartite.
\end{itemize}

We use (some of) the properties above, to prove the following theorem.
\begin{theorem}[Ko\v{c}i\v{s}ko, 2013]
	\label{thm:koci}
	The sequence $\kap{t}$ is Thue, for every non-negative integer $t$.
\end{theorem}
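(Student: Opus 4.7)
My plan is to prove Theorem~\ref{thm:koci} by induction on $t$. The base cases $t\in\set{0,1,2}$ can be checked by inspection of the short sequences $\kap{0}$, $\kap{1}$, $\kap{2}$. For the inductive step, I would assume that $\kap{t-1}$ is Thue and suppose, toward a contradiction, that $\kap{t}$ contains a repetition; let $\rho_1\rho_2$ be a shortest such repetition with $|\rho_1|=r$. By $(K_2)$, the sequence $\kap{t}$ decomposes naturally into $3^{t-1}$ consecutive $\kappa$-blocks, and exploiting this block structure to descend to $\kap{t-1}$ will be the core of the argument.

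Next I would rule out small periods. Property $(K_3)$ says that each $\kappa$-block is a permutation of $\set{1,2,3}$, which excludes repetitions living inside a single block; any repetition with $r\le 3$ must therefore straddle block boundaries, and $(K_1)$, together with the fact that adjacent $\kappa$-blocks are distinct (which follows from the bipartite adjacency recorded in $(K_{12})$), rules out all of these short cases. Hence one may assume $r\ge 4$ from this point on.

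The decisive case split is on $r\bmod 3$. If $r\equiv 0\pmod 3$, write $r=3s$; then each term of $\rho_1$ sits at the same index within its $\kappa$-block as the corresponding term of $\rho_2$, so the third hypothesis of Lemma~\ref{lem:deter} is satisfied for free. Using $(K_5)$ for the interior and $(K_{11})$ at the endpoints, one checks that the $\kappa$-blocks at the two ends of the covering sequences $\gamma_1$, $\gamma_2$ of $\rho_1$, $\rho_2$ are uniquely determined by their intersected terms. Lemma~\ref{lem:deter} then lifts $\rho_1\rho_2$ to a block-level repetition $\gamma_1\gamma_2$, whose $\kappa$-preimages form a repetition of period $s$ in $\kap{t-1}$, contradicting the induction hypothesis.

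The remaining case $r\not\equiv 0\pmod 3$ is the main obstacle, because the within-block indices of corresponding terms no longer agree and Lemma~\ref{lem:deter} cannot be applied directly. Here I would argue using $(K_9)$, which pins the middle term of a $\kappa$-block to the successor modulo $3$ of the corresponding symbol of $\kap{t-1}$, together with $(K_4)$, which forbids two consecutive middle terms from coinciding, and the block-uniqueness results $(K_7)$, $(K_8)$, $(K_{10})$. For $r\equiv 1$ or $r\equiv 2\pmod 3$, the repetition forces a middle term of some block to equal a first or last term of another block; iterating these equalities along $\rho_1\rho_2$ should either violate $(K_4)$ or force two non-adjacent blocks to share a boundary term, contradicting $(K_7)$. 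The delicate part, and where I expect the most bookkeeping, is handling the endpoints of the covering sequences, since depending on where $\rho_1\rho_2$ starts within its initial $\kappa$-block several sub-cases (indexed by the residue of the starting position modulo $3$, for each of $r\equiv 1$ and $r\equiv 2$) must be treated separately.
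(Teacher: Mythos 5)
Your skeleton (induction on $t$, case split on $r \bmod 3$, descent to $\kap{t-1}$ when $3 \mid r$) is the same as the paper's, but the proposal contains one repairable flaw and one genuine gap. The repairable flaw is in the case $3 \mid r$: you claim Lemma~\ref{lem:deter} applies ``for free'', but that lemma presupposes that the covering sequences $\gamma_1, \gamma_2$ are disjoint, consecutive runs of blocks, and this fails whenever the repetition is not aligned with block boundaries. For instance, if $y_1 = x_{3i+2}$ and $r = 3s$, then $y_r = x_{3(i+s)+1}$ and $y_{r+1} = x_{3(i+s)+2}$ lie in the \emph{same} $\kappa$-block, so $\gamma_1$ and $\gamma_2$ overlap and ``$\gamma_1\gamma_2$'' is not even a subsequence of $\kap{t}$. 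The paper deals with exactly this point by first proving a shifting claim: by $(K_3)$ each block is a permutation of $\set{1,2,3}$, so a repetition whose first term sits at index $2$ or $3$ of its block can be translated into a repetition of the same length starting at a block boundary; only after that does the block-level descent to $\kap{t-1}$ go through. You need this step (or an equivalent) before your lifting argument is sound.

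The genuine gap is the case $r \not\equiv 0 \pmod 3$, which you yourself call the main obstacle but never actually prove: ``iterating these equalities \dots should either violate $(K_4)$ or force two non-adjacent blocks to share a boundary term'' is a hope, not an argument, and the proposed route is doubtful as stated. The adjacency in $(K_7)$ and $(K_8)$ is adjacency in the abstract graph of Fig.~\ref{fig:koci}; it carries no positional information by itself (that is what $(K_{12})$ supplies), so nothing in your sketch manifestly produces two \emph{distinct, non-adjacent} blocks sharing a boundary term. The paper's mechanism here is different and concrete: since $\rho_1 = \rho_2$ and $r \not\equiv 0 \pmod 3$, three consecutive known terms of $\rho_2$ (a copy of a full block of $\rho_1$) straddle two $\kappa$-blocks; by $(K_5)$ these two blocks are uniquely determined, which reveals one further term of $\rho_1$, and alternating this $(K_5)$-propagation between $\rho_1$ and $\rho_2$ forces every $\kappa$-block met by $\rho_1$ to end with the same fixed symbol (when $r \equiv 1 \pmod 3$) or to begin with the same fixed symbol (when $r \equiv 2 \pmod 3$), which is impossible where $\rho_1$ and $\rho_2$ are concatenated; the subcases where $y_1$ is the second or third term of its block are handled by the same propagation. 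Without this propagation idea, or a fully worked-out substitute, your induction does not close, so the proposal as written does not constitute a proof.
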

For the sake of completeness, we present a short proof of Theorem~\ref{thm:koci} here also.
\begin{proof}
	We prove the theorem by induction. Clearly, $\kap{0}$ is Thue. 
	Consider the sequence $\kap{t} = \set{x_i}_{i=1}^{3^t}$ and suppose that $\kap{j}$ is Thue for every $j < t$.
	Suppose for a contradiction that there is a repetition in $\kap{t}$ and 
	let $\rho_1\rho_2 = y_1 \dots y_r y_{r+1} \dots y_{2r}$ be a repetition with the minimum length
	(for later purposes we distinguish two repetition factors, although $\rho_1 = \rho_2$).
	By $(K_1)$, we have that $r \ge 3$. We consider two subcases regarding the length $r$ of $\rho_1 (=\rho_2)$.
	
	Suppose first that $r$ is divisible by $3$. Then, as we show in the following claim,
	we may assume that the term $y_1$ is the first term of some $\kappa$-block.
	\begin{claim}
		\label{cl:koci1}
		Let $r$ be divisible by $3$. If $y_1 = x_{3i+2}$ (resp. $y_1 = x_{3i+3}$) for some $i$, $0 \le i < 3^{t-1}$, 
		then $x_{3i+1} x_{3i+2} \dots x_{3i + 2r}$ (resp. $x_{3(i+1)+1} x_{3(i+1)+2} \dots x_{3(i+1) + 2r}$) is also a repetition.
	\end{claim}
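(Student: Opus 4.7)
The plan is to treat the two cases in parallel, each by shifting the repetition $\rho_1\rho_2$ by exactly one position (to the left when $y_1 = x_{3i+2}$, to the right when $y_1 = x_{3i+3}$), producing a new $2r$-term candidate repetition whose first term is a first term of some $\kappa$-block. Because $r$ is divisible by $3$, such a shift preserves the alignment of positions with $\kappa$-block boundaries: positions congruent to $1, 2, 0 \pmod 3$ remain so after translating by $r$. All inner equalities that the new candidate requires are inherited directly from the original $y_j = y_{j+r}$, so the whole proof reduces to verifying a single new equality at the endpoint created by the shift. Property $(K_3)$, which says every $\kappa$-block is a permutation of $\{1,2,3\}$, is the tool that produces this equality for free.

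For the first case, $y_1 = x_{3i+2}$, I would consider the two $\kappa$-blocks $x_{3i+1}x_{3i+2}x_{3i+3}$ and $x_{3i+1+r}x_{3i+2+r}x_{3i+3+r}$; both are genuine $\kappa$-blocks by $(K_2)$ combined with the divisibility of $r$ by $3$. The original repetition gives $x_{3i+2} = x_{3i+2+r}$ and $x_{3i+3} = x_{3i+3+r}$, so the two blocks already agree in their second and third positions; by $(K_3)$ each equals $\{1,2,3\}$ as a set, so their first terms must coincide as well. This is exactly the one missing equality needed to upgrade $\rho_1\rho_2$ to the claimed repetition $x_{3i+1} \dots x_{3i+2r}$, the remaining $r-1$ equalities being just restatements of $y_j = y_{j+r}$ for $1 \le j \le r-1$.

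For the second case, $y_1 = x_{3i+3}$, the argument is dual: I would compare the $\kappa$-blocks $x_{3i+1+r}x_{3i+2+r}x_{3i+3+r}$ and $x_{3i+1+2r}x_{3i+2+2r}x_{3i+3+2r}$. The terms $y_{r-1}, y_r$ of $\rho_1$ and $y_{2r-1}, y_{2r}$ of $\rho_2$ supply equality of the first and second entries of these two $\kappa$-blocks; $(K_3)$ again forces the third entries to match, which yields $x_{3i+3+r} = x_{3i+3+2r}$ and hence the claimed repetition $x_{3(i+1)+1} \dots x_{3(i+1)+2r}$. The main obstacle, such as it is, is purely the bookkeeping: keeping the indices modulo $3$ straight and matching positions of the original repetition to positions within the shifted $\kappa$-block structure. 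No deeper property of $\kappa$ beyond $(K_2)$ and $(K_3)$ is needed.
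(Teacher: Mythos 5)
Your proposal is correct and follows essentially the same route as the paper: both proofs use $(K_3)$ (each $\kappa$-block is a permutation of $\{1,2,3\}$, so two of its terms determine the third) to obtain the single missing boundary equality, with all remaining equalities inherited from the original repetition $y_j = y_{j+r}$. The paper states only the case $y_1 = x_{3i+2}$ and calls the other case analogous, whereas you spell out both shifts explicitly with the same mechanism.
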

	\begin{proofclaim}
		Suppose that $y_1 = x_{3i+2}$. By $(K_3)$, every $\kappa$-block is uniquely determined by two symbols. So $x_{3i+1} = y_r$
		and hence $x_{3i+1} \dots x_{3i + 2r} = y_r y_1 \dots y_{2r-1}$ is a repetition.		
		A proof for the case $y_1 = x_{3i+3}$ is analogous.
	\end{proofclaim}
	
	Hence, we have that $\rho = \tau_1 \dots \tau_{\frac{r}{3}}$, 
	where $\tau_j$ are $\kappa$-blocks for every $j$, $1 \le j \le \frac{r}{3}$.
	But in this case, there is a repetition already in $\kap{t-1}$, contradicting the induction hypothesis.
	
	Therefore, we may assume that $r$ is not divisible by $3$. 
	This means that the first terms $y_1$ and $y_{r+1}$ of the two repetition factors $\rho_1$ and $\rho_2$, respectively, 
	are at different positions within the $\kappa$-blocks they belong to. For example, if $r = 3k+1$, and $y_1$ is the first term
	of the $\kappa$-block $y_1 y_2 y_3$, then $y_{r+1}$ is the second term of the $\kappa$-block $y_r y_{2r+1} y_{2r+2}$.
	There are hence six possible cases regarding the position of $y_1$ and $y_{r+1}$ in their $\kappa$-blocks.	
	
	Suppose first that $y_1$ is the first term of the $\kappa$-block $x_1 x_2 x_3$. 
	By $(K_3)$, $x_1$, $x_2$, and $x_3$ are pairwise distinct. 
	Since $\rho_1 = \rho_2$, we thus know three consecutive elements of two $\kappa$-blocks 
	(the one of $y_{r+1}$ and the subsequent one). 
	By $(K_5)$, we can determine both $\kappa$-blocks, which gives us information about the term $y_4$. 
	Using $(K_5)$ again, we can determine the $\kappa$-block $y_4 y_5 y_6$, 
	namely $y_4 y_5 y_6 = x_2 x_1 x_3$ in the case when $r \equiv 1 \bmod{3}$, 
	and $y_4 y_5 y_6 = x_1 x_3 x_2$ in the case when $r \equiv 2 \bmod{3}$.
	Using the information obtained by determining $\kappa$-blocks using $(K_5)$, 
	we infer that every $\kappa$-block of $\rho_1$ ends with $x_3$ in the former case, or 
	starts with $x_1$ in the latter case. As $\rho_1$ and $\rho_2$ are concatenated, 
	this leads us to contradiction on the existence of a repetition.
	With a similar argument, we obtain a contradiction in the case when $y_{r+1}$ is the first term of its $\kappa$-block.
		
	Suppose now that $r = 3k+1$, for some positive integer $k$, 
	and $y_1$ is the second term of its $\kappa$-block, say $x_3 x_1 x_2$. 
	Then $y_{r+1} = x_1$ and $y_{r+2} = x_2$, where $y_{r+1}$ and $y_{r+2}$ belong to distinct $\kappa$-blocks.
	Notice that there are two possibilities for the value of $y_{r+3}$, namely $x_1$ and $x_3$.
	However, regardless the choice, after determining the $\kappa$-block $y_{r+2} y_{r+3} y_{r+4}$ by $(K_5)$, 
	and continue by alternately determining $\kappa$-blocks in $\rho_1$ and $\rho_2$, as described above,
	we infer that in both cases, every $\kappa$-block in $\rho_1$ ends with $x_2$, a contradiction.
	An analogous analysis may be performed in the last case, when $r = 3k+2$ and $y_1$ being the third term of its $\kappa$-block.	
\end{proof}

\section{Technique \#2: Transposition \& Cyclic Blocks}
\label{sec:cons}

In this section, we present alternative proofs to answer Conjecture~\ref{conj:k+2} in affirmative for the cases $k = 4$ and $k = 6$.
For each of the two cases we present a special morphism and apply it on a non-repetitive sequence. Then, we use sequence wreathing to
extend the sequence by circular blocks. 

\subsection{The case $k=4$}
\label{sub:4}

In this part, to prove the case $k=4$ in Theorem~\ref{thm:45678}, 
we combine the sequence $\kap{t}$ obtained by the hexagonal morphism and the circular sequence $\cir{3}{3^t}$ by wreathing. 
We construct $\kap{t}$ over the alphabet $\set{1,2,3}$, and $\cir{3}{3^{t-1}}$ over the alphabet $\set{4,5,6}$.
We define
$$
	\varphi_4^t = \kap{t} \wreath_3 \cir{3}{3^{t-1}}\,.
$$
For clarity, we refer to the base-blocks of $\varphi_4^t$ as \textit{$\kappa$-blocks} (recall that the wrap-blocks are called \textit{$\zeta$-blocks}).
Additionally, the terms from $\kappa$-blocks (resp. $\zeta$-blocks) are called $\kappa$-terms (resp. $\zeta$-terms). 
\begin{lemma}
	\label{lem:4thue}
	The sequence $\varphi_4^t$ is $4$-Thue for every non-negative integer $t$.
\end{lemma}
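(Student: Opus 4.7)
The plan is to verify, for each $d \in \{1,2,3,4\}$, that every $d$-subsequence of $\varphi_4^t$ is non-repetitive. I would first dispose of the case $d=1$ cleanly, and then reduce the cases $d \in \{2,3,4\}$ to it via Lemma~\ref{lem:deter}, exploiting the uniqueness properties of $\kappa$- and $\zeta$-blocks collected in $(K_1)$--$(K_{12})$ and Observation~\ref{obs:circular}.

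For $d=1$, the disjointness of the base alphabet $\{1,2,3\}$ and the wrap alphabet $\{4,5,6\}$ forces the period $r$ of any repetition in $\varphi_4^t$ to preserve the $\kappa/\zeta$ type pattern. Since the wreathing of order $3$ produces a pattern of period $6$, namely $\kappa\kappa\kappa\zeta\zeta\zeta$, the period $r$ must be a multiple of $6$; otherwise some $\kappa$-symbol would be equated with a $\zeta$-symbol. Writing $r = 6s$, the restriction of the repetition to the $\kappa$-positions is then a repetition of period $3s \geq 3$ in $\kap{t}$, contradicting Theorem~\ref{thm:koci}. For $d \in \{2,3,4\}$, I would start by computing, for each starting position modulo $6$, the periodic $\kappa/\zeta$ type pattern of the $d$-subsequence; a direct check shows that the pattern has period $3$ for $d \in \{2,4\}$ (variants of $\kappa\kappa\zeta$, $\kappa\zeta\zeta$ or $\kappa\zeta\kappa$) and period $2$ for $d=3$ (alternating $\kappa\zeta$). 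Alphabet disjointness then restricts the period $m$ of any repetition to be a multiple of the corresponding type period.

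With this restriction in hand, I would analyse the two sides of the induced matching separately. On the $\zeta$-side, Observation~\ref{obs:circular} makes any $\zeta$-block determined by one of its terms, so matching even a single $\zeta$-term per block forces pairs of $\zeta$-blocks to coincide; by the rotational structure of $\cir{3}{3^{t-1}}$, this either forces the period to be a multiple of $9$ or yields an immediate contradiction. On the $\kappa$-side, I would use $(K_3)$ to recover the missing terms of matched $\kappa$-blocks when two of three terms are matched, and $(K_5)$ to identify $\kappa$-blocks from three consecutive matched terms; Lemma~\ref{lem:deter} will then lift the restricted repetition into a genuine repetition in $\varphi_4^t$, contradicting the $d=1$ case. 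The main obstacle I expect is the offsets where only the middle term of each $\kappa$-block appears in the $d$-subsequence --- typically $d=2$ starting at an even position and certain $d=4$ offsets --- because then $(K_3)$ alone cannot pin down a $\kappa$-block from its middle term. There I intend to use $(K_4)$ to rule out short periods outright and to chain $(K_{11})$ with $(K_{12})$ for longer periods, propagating block identification from the already-determined $\zeta$-side through adjacency to neighbouring $\kappa$-blocks. Very short repetitions, whose covering sequence is too small for such lifting arguments, will be excluded directly, either via $(K_1)$ applied to pairs of adjacent $\kappa$-blocks or by a quick inspection of a small prefix of $\varphi_4^t$.
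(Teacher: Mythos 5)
Your overall skeleton (type-pattern periodicity, divisibility constraints, block determination, and lifting via Lemma~\ref{lem:deter} to a repetition in $\varphi_4^t$) matches the paper's strategy, and your $d=1$ argument is a legitimate substitute for the paper's one-line appeal to Lemma~\ref{lem:insert}. However, there is a genuine gap exactly at the case you yourself flag as the main obstacle: the offsets where only the \emph{middle} term of each $\kappa$-block occurs in the $d$-subsequence (e.g.\ $d=2$ from an even position, and the analogous $d=3$ and $d=4$ offsets). Your proposed fix --- ruling out short periods by $(K_4)$ and, for long periods, ``propagating block identification from the already-determined $\zeta$-side through adjacency to neighbouring $\kappa$-blocks'' by chaining $(K_{11})$ with $(K_{12})$ --- cannot work. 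The $\zeta$-structure and the $\kappa$-structure are independent: the wrap $\cir{3}{3^{t-1}}$ is the same fixed cyclic pattern $456\ 564\ 645\ \dots$ no matter what $\kap{t}$ is, so identifying the $\zeta$-blocks of the repetition yields only positional information (an index modulo $9$) and says nothing about which symbols sit in the neighbouring $\kappa$-blocks. The adjacency in $(K_{11})$ is adjacency inside the hexagonal structure of $\kap{t}$, not adjacency in $\varphi_4^t$ between a $\kappa$-block and the $\zeta$-block next to it. Moreover, $(K_{11})$ needs one fully determined $\kappa$-block to get started, and in this case no $\kappa$-block is ever determined: a middle term alone leaves two candidate blocks (for instance, middle term $2$ is compatible with both $123$ and $321$). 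So the hypothesis of Lemma~\ref{lem:deter} (unique determination of the covering blocks) fails, and your chain never starts.

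What is missing is property $(K_9)$, which you never invoke and which is precisely how the paper resolves this case: the middle term of $\pi(\kappa(i))$ equals $\pi(i)+1$ (modulo $3$), so the sequence of middle terms of consecutive $\kappa$-blocks is exactly $\kap{t-1}$ with every symbol shifted by one, hence itself a Thue sequence by Theorem~\ref{thm:koci}. Since the type patterns force $\kappa$-terms of $\rho_1$ to align with $\kappa$-terms of $\rho_2$, a repetition in such a $d$-subsequence restricts to a repetition among consecutive middle terms, i.e.\ a repetition in $\kap{t-1}$; equivalently, as the paper does, one applies Lemma~\ref{lem:insert} to the middle-term sequence with the $\zeta$-terms playing the role of the inserted disjoint-alphabet blocks. (For the $d=4$ offset of this kind the paper instead uses a parity count via $(K_{12})$, but there too the information is extracted from the $\kappa$-side itself, never imported from the $\zeta$-side.) Without $(K_9)$ or an equivalent observation, the middle-term case --- and with it the lemma --- remains unproved.
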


\begin{proof}
	Since $\kap{t}$ is Thue by Theorem~\ref{thm:koci}, $\varphi_4^t$ is also Thue by Lemma~\ref{lem:insert}. 
	Thus, it remains to prove that every $d$-subsequence of $\varphi_4^t$ is Thue, for every $d \in \set{2,3,4}$. 
	Observe first that by $(K_1)$, $(K_6)$, and the definition of circular sequences, 
	every five consecutive terms of $\varphi_4^t$ are distinct.
	This in particular means that 
	\begin{itemize}
		\item[$(P_1)$] \quad \textit{there are no repetitions of length $2$ or $4$ in any $d$-subsequence of $\varphi_4^t$.}
	\end{itemize}
	Moreover, 
	\begin{itemize}
		\item[$(P_2)$] \quad \textit{in every $d$-subsequence of $\varphi_4^t$ there are at most two consecutive $\kappa$-terms or $\zeta$-terms;}
		\item[$(P_3)$] \quad \textit{in every $d$-subsequence of $\varphi_4^t$ any repetition contains $\kappa$-terms and $\zeta$-terms;}
		\item[$(P_4)$] \quad \textit{if a $\kappa$-term (resp. $\zeta$-term) in a $d$-subsequence $\sigma$ of $\varphi_4^t$, 
				whose predecessor and successor in $\sigma$ are $\zeta$-terms (resp. $\kappa$-terms), is at index $i$ within its $\kappa$-block (resp. $\zeta$-block),
				then every $\kappa$-term (resp. $\zeta$-term) in $\sigma$ is at index $i$ within its $\kappa$-block (resp. $\zeta$-block).}
	\end{itemize}
	All the latter three properties are direct corollaries of $(P_1)$ and the fact that every $\kappa$-block and $\zeta$-block is of length $3$.	
	
	Now, we prove that every $d$-subsequence of $\varphi_4^t$ is non-repetitive, considering three cases with regard to $d$.
	In each case, we assume there is a repetition $\rho_1 \rho_2 = y_1 \dots y_{r} y_{r+1} \dots y_{2r}$ in 
	some $d$-subsequence $\sigma$ and eventually reach a contradiction on its existence.	
	
	By $(P_3)$, there is at least one $\zeta$-term in $\rho_1$. 
	Moreover, by the definition of circular sequences and $\varphi_4^t$,
	every three consecutive $\zeta$-terms in $\rho_1$ (ignoring the $\kappa$-terms) are distinct, 
	unless $d=4$ and the $\zeta$-terms of $\rho_1$ are at indices $1$ and $3$ in $\zeta$-blocks. 
	However, in such a case, by construction of circular sequences, without loss of generality,
	consecutive $\zeta$-terms of $\rho_1$ are $4\ 4\ 5\ 5\ 6\ 6\dots$, which means that $r$
	must be divisible by $6$, to have the same sequence of $\zeta$-terms in $\rho_2$.
	This implies that  
	\begin{itemize}
		\item[$(P_5)$] \quad \textit{the number of $\zeta$-terms in $\rho_1$ is divisible by $3$,}
	\end{itemize}
	and consequently, since in $\zeta$-blocks the symbols repeat at the same indices in every third block:
	\begin{itemize}
		\item[$(P_6)$] \quad \textit{the number of $\zeta$-blocks to which the $\zeta$-terms of $\rho_1$ belong to in $\varphi_4^t$ is divisible by $3$.}
	\end{itemize}
	Observe that, by the above properties, 
	\begin{itemize}
		\item[$(P_7)$] \quad \textit{the first terms of $\rho_1$ and $\rho_2$ are either both $\kappa$-terms or $\zeta$-terms, and moreover, 
			they appear at the same index within their blocks in $\varphi_4^t$.}
	\end{itemize}
	Now, we start the analysis regarding $d$:
	\begin{itemize}
		\item{} \textit{$d = 2$.} \\
			Suppose first that $y_1$ is the first term of some $\kappa$-block.
			Then, $\rho_1$ is comprised alternately of two $\kappa$-terms (the first and the third terms of a $\kappa$-block in $\varphi_4^t$)
			and one $\zeta$-term (the second term of its $\zeta$-block in $\varphi_4^t$).
			Consequently, $y_{r+1}$ is the first term of a $\kappa$-block also, 
			and	the last term of $\rho_1$ must be a $\zeta$-term. 
			By $(K_3)$, every $\kappa$-block is uniquely determined by two of its terms, hence
			one can determine all $\kappa$-blocks to which the $\kappa$-terms of $\rho_1$ and $\rho_2$ belong to in $\varphi_4^t$. 
			Similarly, all the $\zeta$-blocks, to which $\zeta$-terms of $\rho_1$ and $\rho_2$ belong, are uniquely determined by Observation~\ref{obs:circular}. 
			Moreover, since the terms $y_{r}$ and $y_{r+1}$ belong to different
			blocks, we can apply Lemma~\ref{lem:deter} obtaining a contradiction on the existence of $\rho_1 \rho_2$.
			
			Suppose now that $y_1$ is the third term of some $\kappa$-block.
			A similar argument as in the paragraph above shows that $y_r$ 
			is the first term of some $\kappa$-block $\gamma_r$ of $\varphi_4^t$,
			while $y_{r+1}$ is the third term of $\beta_r$. Note that the terms $y_2,y_3$ and $y_{r+2},y_{r+3}$ uniquely determine the same block $\gamma_2$.
			Consider now the $\kappa$-block $\gamma_{1}$ to which $y_1$ belongs. 
			By $(K_7)$, it is one of the two possible $\kappa$-blocks that end with $y_1$, and since $\gamma_1$ and $\gamma_r$ are adjacent to $\gamma_2$,
			by $(K_8)$, we infer that $\gamma_1 = \gamma_r$. 
			Thus, taking the first term $z$ of $\gamma_1$, we have a repetition $z y_1 \dots y_{r-1} y_r \dots y_{2r-1}$ in $\sigma$, 
			which satisfies the assumptions of Lemma~\ref{lem:deter}. 
			Hence, there is a repetition in $\varphi_4^t$, a contradiction.
			
			Next, suppose that $y_1$ is the second term of some $\kappa$-block.
			Then, by $(P_4)$, all the $\kappa$-terms in $\rho_1$ and $\rho_2$ are the second terms of $\kappa$-blocks in $\varphi_4^t$.
			By $(K_9)$, we have that the second terms of $\kappa$-blocks in $\varphi_4^t$ are exactly the terms of $\kap{t-1}$ shifted by $1$,
			and thus form a non-repetitive sequence. 
			Using Lemma~\ref{lem:insert}, we infer that the sequence $\sigma$ is also non-repetitive, a contradiction.
			
			Finally, suppose that $y_1$ is a $\zeta$-term. 
			Let $\gamma$ be the last $\zeta$-block in $\varphi_4^t$ to which some term of $\rho_2$ belongs.
			Since a $\zeta$-block is uniquely determined by at least one of its terms, using $(P_5)$, we infer that the 
			$\zeta$-block of $\varphi_4^t$ following $\gamma$ is equal to the $\zeta$-block uniquely determined by $y_1$.
			Let $y \in \set{y_2, y_3}$ be the first $\kappa$-term of $\rho_1$. 
			The observation above implies that there exists a repetition in $\sigma$ starting with $y$ and ending with the $\zeta$-terms before $y$ in $\rho_1$.
			Such a repetition cannot exist due to the analysis of the cases above.
		\item{} \textit{$d = 3$.} \\
			Suppose that $y_1$ is the first term of some $\kappa$-block.
			Clearly, the first term of $\rho_2$ is also a $\kappa$-term, and thus the number of $\kappa$-terms 
			in $\rho_1$ is divisible by $3$, by $(P_5)$. 
			Therefore, there are at least six $\kappa$-terms in $\rho_1 \rho_2$, 
			meaning there are two distinct consecutive $\kappa$-terms. 
			Using $(K_{10})$ and $(K_{11})$, we can uniquely determine all $\kappa$-blocks to which the $\kappa$-terms of $\rho_1$ and $\rho_2$ belong. 
			So, by Lemma~\ref{lem:deter}, we obtain a contradiction.
			
			If $y_1$ is the third term of some $\kappa$-block, we use the same argument as in the paragraph above.
			
			The argument when $y_1$ is the second term of some $\kappa$-block is analogous to the subcase in the case $d=2$, 
			where $y_1$ is the second term of some $\kappa$-block.
			
			In the case when $y_1$ is a $\zeta$-term, we can again translate the analysis to the one of the above
			cases, since the $\zeta$-triples have period $3$.
		\item{} \textit{$d = 4$.} \\
			Suppose that $y_1$ is the first term of some $\kappa$-block. 
			By $(P_1)$, the length of $\rho_1$ is at least $3$.
			Furthermore, $y_2$ and $y_3$ are a $\zeta$-terms (the second terms of some $\zeta$-block)
			and a $\kappa$-term (the third term of some $\kappa$-block), respectively.
			By $(P_4)$, all $\zeta$-terms in $\rho_1$ are the second terms of $\zeta$-blocks.
			Thus, by $(P_5)$ and the fact that for every $\zeta$-term in $\rho_1$ there are two $\zeta$-blocks in $\hat{\rho_1}$,
			we have that the number of $\zeta$-blocks in $\hat{\rho_1}$ is divisible by $6$.
			By $(P_7)$, $y_{r+1}$ is also the first term of some $\kappa$-block in $\varphi_4^t$, 
			meaning that the number of $\kappa$-blocks in $\hat{\rho_1}$ is also divisible by $6$ 
			and that the number of $\kappa$-blocks between the blocks of $y_1$ and $y_{r+1}$ is odd. 
			Hence, by $(K_{12})$, the $\kappa$-blocks of $y_1$ and $y_{r+1}$ are the same.
			Analogously, all the blocks of the $\kappa$-terms $y_i$ in $\rho_1$ 
			are the same as the $\kappa$-blocks of $y_{i+r}$ in $\rho_2$.
			Thus, there is a repetition in $\varphi_4^t$ also, a contradiction.
			
			Suppose now that $y_1$ is the second term of some $\kappa$-block.
			Similarly as in the case above, we notice that all $\kappa$-terms of $\rho_1$ are the second terms in their $\kappa$-blocks
			in $\hat{\rho_1}$, and that the number of $\kappa$-blocks in $\hat{\rho_1}$ is divisible by $6$.
			Again, we deduce that for every two $\kappa$-terms $y_i$ and $y_j$ in $\sigma$, there are even number of $\kappa$-blocks between the 
			$\kappa$-blocks of $y_i$ and $y_j$ in $\hat{\sigma}$. 
			It follows that every pair of equal $\kappa$-symbols in $\sigma$ belongs
			to the same $\kappa$-block, and hence $\hat(\rho_1) = \hat{\rho_2}$, a contradiction.
			
			The cases, when $y_1$ is the third term of some $\kappa$-block, or the second term of some $\zeta$-block 
			are analogous to the first case. 
			The cases, when $y_1$ is the first or the third term of some $\zeta$-block are analogous to the second case.
	\end{itemize}	
	
\end{proof}

\subsection{The case $k=6$}

In this part, we present a construction of a $6$-Thue sequence using $8$ symbols, 
in a similar way as for the case $k=4$. Again, we wreath a Thue sequence with a circular sequence,
but now, the base sequence is formed by blocks of four symbols, 
where in each block we only permute symbols in fixed pairs.

Similarly as in Section~\ref{sec:koc}, we start by constructing a Thue sequence over an alphabet 
$$
	\mathbb{B} = \set{1,2,3,4}
$$ 
of $4$ symbols. Let a morphism $\lambda$, mapping a symbol from the sequence to a block of four distinct symbols, be defined as
\begin{eqnarray*}
	\lambda(1) = 1 \ 2 \ 3 \ 4 \,, 
		\quad  
	\lambda(2) = 2 \ 1 \ 4 \ 3 \,,
		\quad
	\lambda(3) = 1 \ 2 \ 4 \ 3 \,, 
		\quad  
	\lambda(4) = 2 \ 1 \ 3 \ 4 \,.
\end{eqnarray*}
For a positive integer $t$, we recursively define the sequence
$$
	\slam{t} = \lambda(\slam{t-1})\,,
$$
where $\slam{0} = 1$. 
Notice that for every positive integer $t$, 
every symbol from $\mathbb{B}$ is a neighbor of all symbols of $\mathbb{B}$.
The blocks of four symbols to which the symbols from $\mathbb{B}$ are mapped by $\lambda$, 
are referred to as \textit{$\lambda$-blocks}. 
In Fig.~\ref{fig:k6}, the mappings of $\lambda$ are depicted.
\begin{figure}[htp]
	$$
		\includegraphics{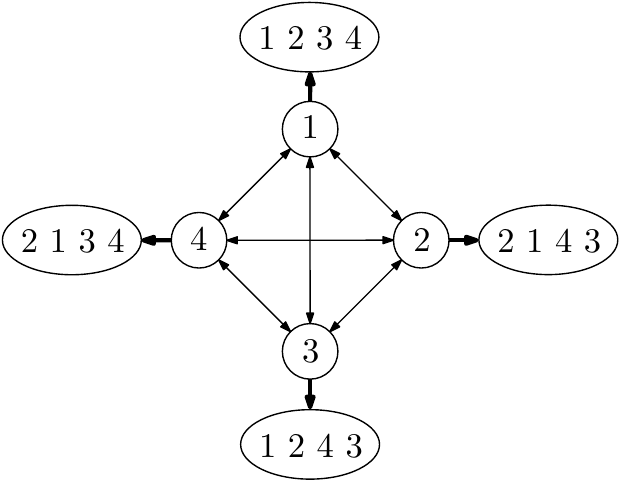}
	$$
	\caption{The graph of adjacencies between the symbols of $\mathbb{B}$ and $\lambda$-blocks, and the mappings defined by~$\lambda$.}
	\label{fig:k6}
\end{figure}

We first observe some basic properties of the sequence $\slam{t}$, for any positive integer $t$.
\begin{itemize}
	\item[$(L_1)$] \quad For any pair of adjacent $\lambda$-blocks $\gamma_1$ and $\gamma_2$, the sequence $\gamma_1 \gamma_2$ is Thue.
	\item[$(L_2)$] \quad The length of $\slam{t}$ is $4^t$, and $x_{4i+1}x_{4i+2}x_{4i+3}x_{4i+4}$ is a $\lambda$-block for every $i$, $0 \le i \le 4^{t-1}-1$.
	\item[$(L_3)$] \quad $\{x_{4i+1},x_{4i+2}\} = \set{1,2}$ and $\{x_{4i+3},x_{4i+4}\} = \{ 3,4 \}$ for every $i$, $0 \le i \le 4^{t-1}-1$.
		Consequently, by knowing at least one term at index $1$ or $2$, and at least one term at index $3$ or $4$, 
		the $\lambda$-block is uniquely determined.
	\item[$(L_4)$] \quad For every $i$, $0 \le i \le 4^{t-1} - 3$, it holds: $x_{4i+1}x_{4i+2}x_{4i+3}x_{4i+4} \neq x_{4i+9}x_{4i+10}x_{4i+11}x_{4i+12}$ 
		(this is in fact a consequence of $(L_3)$).
	\item[$(L_5)$] \quad Two consecutive $\lambda$-blocks with the same first two terms are mapped from $\set{1,3}$ or $\set{2,4}$.
		Similarly, two consecutive $\lambda$-blocks with the same last two terms are mapped from $\set{1,4}$ or $\set{2,3}$.
	\item[$(L_6)$] \quad Let $\gamma_1$ and $\gamma_2$ be distinct $\lambda$-blocks with equal terms 
		at indices $1$ and $2$ or at indices $3$ and $4$.
		For $\lambda$-blocks $\gamma_3$, $\gamma_4$, and $\gamma_5$, 
		in $\slam{t}$, there is at most one of the subsequences $\gamma_1 \gamma_3 \gamma_5$ and $\gamma_2 \gamma_4 \gamma_5$, 
		since otherwise the property $(L_3)$ would be violated in $\slam{t-1}$.
	\item[$(L_7)$] \quad If for two $\lambda$-blocks $\gamma_1 = x_{4i+1}x_{4i+2}x_{4i+3}x_{4i+4}$ and $\gamma_2 = x_{4j+1}x_{4j+2}x_{4j+3}x_{4j+4}$
		there is such $\ell \in \set{1,2,3,4}$ that $x_{4i+\ell} = x_{4j+\ell}$ and $4$ divides $|j-i|$, then $\gamma_1 = \gamma_2$.
		On the other hand, if $4$ does not divide $|j-i|$, but $|j-i|$ is even, then $\gamma_1 \neq \gamma_2$.
	\item[$(L_8)$] \quad If for a $\lambda$-block $\gamma$ one term is known, then it is one of two possible $\lambda$-blocks.
		In particular, if the known term is at index $1$ or $2$ in $\gamma$, then either $\lambda^{-1}(\gamma) \in \set{1,3}$ or $\lambda^{-1}(\gamma) \in \set{2,4}$.
		If the known term is at index $3$ or $4$ in $\gamma$, then either $\lambda^{-1}(\gamma) \in \set{1,4}$ or $\lambda^{-1}(\gamma) \in \set{2,3}$.
\end{itemize}
We leave the above properties to the reader to verify and proceed by proving that $\slam{t}$ is Thue.

\begin{lemma}
	\label{lem:k6}
	The sequence $\slam{t}$ is Thue for every non-negative integer $t$.
\end{lemma}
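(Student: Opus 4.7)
The plan is to proceed by induction on $t$, following closely the architecture of the proof of Theorem~\ref{thm:koci}. The base case $\slam{0} = 1$ is trivially Thue. For the inductive step, assume $\slam{j}$ is Thue for every $j < t$ and suppose, for contradiction, that $\slam{t}$ contains a repetition $\rho_1 \rho_2 = y_1 \dots y_r y_{r+1} \dots y_{2r}$ of minimum length. By $(L_1)$, $r$ must be large enough that $\rho_1 \rho_2$ cannot fit inside two adjacent $\lambda$-blocks.

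The proof splits according to $r \bmod 4$, just as the hexagonal case split mod $3$. The easy subcase is $r \not\equiv 0 \pmod{4}$. By $(L_3)$, every $\lambda$-block carries symbols from $\{1,2\}$ at positions $\{1,2\}$ and symbols from $\{3,4\}$ at positions $\{3,4\}$. A shift by $r$ then moves at least one position from the pair $\{1,2\}$ into the pair $\{3,4\}$, so there exists an index $i$ with $y_i \in \{1,2\}$ and $y_{r+i} \in \{3,4\}$ (or vice versa), contradicting $y_i = y_{r+i}$. Specifically, when $r \equiv 2 \pmod{4}$ the failure is already visible at $i = 1$, and for odd $r$ it occurs within the first three indices.

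The substantive case is $r \equiv 0 \pmod{4}$; write $r = 4k$. I first reduce to the aligned situation where $y_1$ is the first term of some $\lambda$-block, by showing that the blocks $B_1$ and $B_{k+1}$ respectively containing $y_1$ and $y_{r+1}$ coincide as $4$-tuples, so prepending their common preceding terms produces a repetition of the same length starting at position~$1$. If $y_1$ is at position $p = 2$, the shared visible positions $\{2,3,4\}$ include a term from each pair, and $(L_3)$ alone forces $B_1 = B_{k+1}$. For $p \in \{3,4\}$, the visible shared terms all lie within a single pair and $(L_3)$ is insufficient; here the argument combines the equality of the full middle blocks $B_j = B_{k+j}$ for $2 \le j \le k$ with the structural properties $(L_5)$--$(L_8)$, in particular $(L_7)$, which identifies two blocks as soon as their block indices differ by a multiple of $4$ and they share one term, and $(L_6)$, which forbids the simultaneous occurrence of $\gamma_1 \gamma_3 \gamma_5$ and $\gamma_2 \gamma_3 \gamma_5$ when $\gamma_1 \neq \gamma_2$ share the relevant pair. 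Symmetrically one obtains $B_{k+1} = B_{2k+1}$.

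Once the repetition is aligned, $\rho_1$ and $\rho_2$ each consist of $k$ consecutive $\lambda$-blocks that agree pairwise as $4$-tuples. Since $\lambda$ is injective on $\mathbb{B}$ (its four images $1234$, $2143$, $1243$, $2134$ are pairwise distinct), applying $\lambda^{-1}$ term by term yields a repetition of length $2k = r/2$ in $\slam{t-1}$, contradicting the inductive hypothesis. The main obstacle I anticipate is the shift step for $p \in \{3,4\}$: the visible partial endpoint blocks lie entirely inside one of the pairs $\{1,2\}$ or $\{3,4\}$, so $(L_3)$ alone cannot pin them down, and the combined use of $(L_5)$--$(L_8)$ together with the equality of middle blocks is the technical heart of the argument.
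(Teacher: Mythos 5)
Your proposal follows essentially the same route as the paper's own proof (minimal counterexample, elimination of $r \not\equiv 0 \pmod 4$ via the position classes of $(L_2)$/$(L_3)$, realignment of the repetition to a block boundary, then pulling back through $\lambda^{-1}$ to contradict the Thueness of $\slam{t-1}$), and your treatment of $r \not\equiv 0 \pmod 4$ and of the position $p=2$ is correct. The genuine gap is exactly the step you yourself defer as ``the technical heart'': the realignment when $y_1$ sits at index $3$ of its block is only announced, never carried out, and the mechanism you propose for it cannot work as stated when $r$ is small. Your plan rests on the equality of the \emph{full} middle blocks $B_j = B_{k+j}$ for $2 \le j \le k$; when $r = 4$ (i.e.\ $k=1$) there are no full middle blocks at all, and when $r = 8$ with $p = 3$ the anchor block $B_3$ needed for the $(L_6)$-type argument is split between $\rho_1$ and $\rho_2$, so its equality with $B_{k+3}$ is not given by the repetition. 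This is precisely why the paper (i) first proves $r > 4$ by a dedicated argument (for $r=4$, $p=3$, properties $(L_1)$, $(L_3)$, $(L_5)$ force the preimages of the three covering blocks into the class pattern $\set{3,4},\set{1,2},\set{3,4}$ or its swap, which $(L_3)$ forbids for three consecutive terms of $\slam{t-1}$), and (ii) inside the case $j = 4i+3$ separately excludes $r=8$ via $(L_4)$, since there $B_2 = B_4$ would surround $B_3$ by two equal blocks. Only for $r \ge 12$ does the argument you envisage close, and then its precise form is $(L_6)$: the repetition gives $B_2 = B_{k+2}$ and $B_3 = B_{k+3}$, while $B_1$ and $B_{k+1}$ share the terms at indices $3$ and $4$, so both $B_1 B_2 B_3$ and $B_{k+1} B_{k+2} B_3$ occur in $\slam{t}$, forcing $B_1 = B_{k+1}$. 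Note also that $(L_7)$, which you invoke, does not apply directly to $B_1$ and $B_{k+1}$: their block-index difference is $k = r/4$, which need not be divisible by $4$.

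A secondary inaccuracy: you lump $p=4$ with $p=3$ as hard, but it is as easy as $p=2$. The paper handles it by shifting \emph{right} instead of left: since positions $4i+3$ and $4i+4$ carry the pair $\set{3,4}$, the known equality $x_{j+r-1} = x_{j+2r-1}$ forces $x_{j+r} = x_{j+2r}$ by complementation within that pair, so dropping $y_1$ and appending one term yields an aligned repetition of the same length with no appeal to $(L_5)$--$(L_8)$; your alternative plan of proving $B_1 = B_{k+1}$ from a single shared term would again need the $(L_6)$ machinery and again collapses for small $r$. In summary: the skeleton is right and matches the paper, but the case that carries the actual content of the lemma ($p=3$), together with the small-$r$ exceptions ($r=4$ and $r=8$) that any such argument must treat separately, is missing from your proof.
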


\begin{proof}
	Suppose the contrary, and let $t$ be the minimum such that there is a repetition in $\slam{t}$.
	Denote the $i$-th term of $\slam{t}$ by $x_i$.
	Let $\rho_1 \rho_2 = y_1\dots y_r y_{r+1}\dots y_{2r}$ be a repetition of minimum length.
	We first show that $r > 4$. The cases with $r \le 3$ are trivial, so suppose $r=4$. 
	By $(L_1)$, we have that $y_1$ is not at index $4i+1$ in $\slam{t}$ (for any $i$, $0\le i \le 4^{t-1}-1$),
	and by $(L_3)$, it is not at index $4i+2$ nor $4i+4$. 
	Hence, assume $y_1$ is at index $4i+3$. 
	Denote the $\lambda$-block $x_{4i+5}x_{4i+6}x_{4i+7}x_{4i+8} (= y_3y_4y_5y_6)$ by $\gamma_1$.
	By $(L_1)$, we have that $\gamma_0 = x_{4i+1}x_{4i+2}x_{4i+3}x_{4i+4} = y_4y_3y_5y_6$ and similarly,
	$\gamma_2 = x_{4i+9}x_{4i+10}x_{4i+11}x_{4i+12} = y_3y_4y_6y_5$. By $(L_5)$, 
	this means that if $\gamma_1 \in \set{1,2}$, then $\gamma_0, \gamma_2 \in \set{3,4}$, and analogously, 
	if $\gamma_1 \in \set{3,4}$, then $\gamma_0, \gamma_2 \in \set{1,2}$, a contradiction to $(L_3)$.
	Hence, $r > 4$.
	
	Let $j$ be the index of $y_1$ in $\slam{t}$, i.e. $y_1 = x_j$.
	If $j$ is odd, then by $(L_3)$, either $x_{j}x_{j+1} = \set{1,2}$ or $x_{j}x_{j+1} = \set{3,4}$,
	and without loss of generality, we may assume the former.
	Thus, also $x_{j+r}x_{j+r+1} = \set{1,2}$, which implies that $r$ must be even. 
	In the case when $j$ is even, $(L_3)$ similarly implies that $x_j \in \set{1,2}$ and $x_{j+1} \in \set{3,4}$,
	and hence $x_{j+r} \in \set{1,2}$ and $x_{j+r+1} \in \set{3,4}$. Consequently, $r$ is again even.
	Finally observe that by $(L_2)$, from $r$ being even and $x_j = x_{j+r}$ it follows that $r$ is divisible by $4$.
	
	Suppose now that $j = 4i+1$, for some $i$. 
	Then, since $r$ is divisible by $4$, $\rho_1$ and $\rho_2$ are comprised of $\tfrac{r}{4}$ $\lambda$-blocks each, 
	the first starting with $x_j$.
	This in turn means that there is a repetition in $\slam{t-1}$ as every $\lambda$-block represents one term in $\slam{t-1}$,
	a contradiction to the minimality of $t$.
	
	Next, suppose $j = 4i + 2$. By $(L_3)$, we have that $x_{j-1} = x_{j+r-1}$, and hence 
	$\rho_1' \rho_2' = x_{j-1}x_{j}\dots x_{j+r-1} x_{j+r} \dots x_{j+2r-2}$ is also a repetition in $\slam{t}$,
	where $j-1 = 4i + 1$, and hence the reasoning in the above paragraph applies.
	
	Suppose $j = 4i + 4$. Then, analogous to the previous case, we infer $x_{j+r} = x_{j+2r}$, and hence 
	$\rho_1' \rho_2' = x_{j+1}\dots x_{j+r} x_{j+r+1} \dots x_{j+2r}$ is also a repetition in $\slam{t}$,
	where $j+1 = 4(i+1) + 1$, so the reasoning for $j = 4i+1$ applies again.
	
	Finally, consider the case with $j = 4i + 3$. If $r = 8$, from $x_{j+2}x_{j+3}x_{j+4}x_{j+5} = x_{j+10}x_{j+11}x_{j+12}x_{j+13}$
	it follows that the $\lambda$-block $x_{j+6}x_{j+7}x_{j+8}x_{j+9}$ is surrounded by the same $\lambda$-blocks, which 
	contradicts $(L_4)$. 
	Hence, we may assume $r \ge 12$. Since the $\lambda$-blocks $x_{j+6}x_{j+7}x_{j+8}x_{j+9}$ and $x_{j+r+6}x_{j+r+7}x_{j+r+8}x_{j+r+9}$ are equal,
	and $r$ is divisible by $4$, it follows that also $x_{j-2}x_{j-1}x_{j}x_{j+1} = x_{j+r-2}x_{j+r-1}x_{j+r}x_{j+r+1}$ 
	and we may apply the reasoning for the case with $j=4i+1$ on the repetition $x_{j-2}\dots x_{j+2r-3}$.	
	Hence, $\slam{t}$ is Thue.
\end{proof}

Now, take the circular sequence $\cir{4}{4^{t-1}}$, with $\varphi = 5 \ 6 \ 7 \ 8$,
and use sequence wreathing on $\slam{t}$ and $\cir{4}{4^{t-1}}$ to obtain the sequence
$$
	\varphi_6^t = \slam{t} \wreath_4 \cir{4}{4^{t-1}}\,.
$$
Similarly as above, we refer to the base-blocks of $\varphi_6^t$ as $\lambda$-blocks, and to the wrap-blocks as $\zeta$-blocks.
The terms of $\lambda$-blocks (resp. $\zeta$-blocks) are referred to as $\lambda$-terms (resp. $\zeta$-terms).
The sequence $\varphi_6^2$ is hence:
$$
	\underbrace{1 \ 2 \ 3 \ 4}_{\lambda(1)} \ \ 5 \ 6 \ 7 \ 8 \ \ \ \underbrace{2 \ 1 \ 4 \ 3}_{\lambda(2)} \ \ 6 \ 7 \ 8 \ 5 \ \ \ \underbrace{1 \ 2 \ 4 \ 3}_{\lambda(3)} \ \ 7 \ 8 \ 5 \ 6 \ \ \underbrace{2 \ 1 \ 3 \ 4}_{\lambda(4)} \ \ 8 \ 5 \ 6 \ 7
$$

It remains to prove that $\varphi_6^t$ is also $6$-Thue.
\begin{lemma}
	\label{lem:6k}
	The sequence $\varphi_6^t$ is $6$-Thue for every non-negative integer $t$.
\end{lemma}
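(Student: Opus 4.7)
The proof should mirror that of Lemma~\ref{lem:4thue} essentially step-by-step, with $4$-element blocks taking the place of $3$-element blocks. First, $\varphi_6^t$ is Thue: $\slam{t}$ is Thue by Lemma~\ref{lem:k6}, each $\zeta$-block is a cyclic shift of $5678$, and the $\lambda$- and $\zeta$-terms lie in disjoint alphabets, so Lemma~\ref{lem:insert} applies. It remains to show that every $d$-subsequence is non-repetitive for $d \in \set{2,3,4,5,6}$. Fix such a $d$ and assume for contradiction a minimum-length repetition $\rho_1 \rho_2 = y_1 \dots y_r y_{r+1} \dots y_{2r}$ in some $d$-subsequence $\sigma$ of $\varphi_6^t$.

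The next step is to record structural analogues of $(P_1)$--$(P_7)$ from the proof of Lemma~\ref{lem:4thue}. Any eight consecutive terms of $\varphi_6^t$ are pairwise distinct: within a $\lambda$-block by $(L_3)$, within a $\zeta$-block by the construction of $\cir{4}{4^{t-1}}$, and across any block boundary because the two alphabets are disjoint. Consequently no $d$-subsequence with $d \le 6$ contains a repetition of length $2$. A short case-check modulo $8$, using $(L_3)$ for $\lambda$-pairs at the relevant offsets and the fact that two same-index terms of consecutive $\zeta$-blocks always differ, rules out length-$4$ repetitions as well, giving $r \ge 3$. Moreover, at most two consecutive terms of $\sigma$ are of the same type, every repetition in $\sigma$ must mix $\lambda$- and $\zeta$-terms, and if $y_i$ is of one type while both its neighbors in $\sigma$ are of the other, then its in-block index propagates to every same-type term of $\rho_1$. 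Finally, the period-$4$ structure of $\cir{4}{4^{t-1}}$ forces the number of $\zeta$-terms of $\rho_1$ to be a multiple of $4$, so $y_1$ and $y_{r+1}$ share both type and in-block index.

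With these preliminaries in hand, I would case-split on $d \in \set{2,3,4,5,6}$ and, inside each case, on the type of $y_1$ and its in-block index $\ell \in \set{1,2,3,4}$. The treatment in each subcase follows the template of Lemma~\ref{lem:4thue}: use $(L_3)$ and $(L_8)$ together with the matched pairs of in-block values to uniquely determine every $\lambda$-block meeting $\rho_1 \cup \rho_2$; use Observation~\ref{obs:circular} to uniquely determine every $\zeta$-block from a single term; and then apply Lemma~\ref{lem:deter} to obtain either a repetition of $\varphi_6^t$ itself (contradicting the Thue-ness just established) or, after compressing $\lambda$-blocks back to single letters of $\mathbb{B}$, a repetition of $\slam{t}$ (contradicting Lemma~\ref{lem:k6}). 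The cases $\ell \ne 1$ reduce to the case $\ell = 1$ by a shift of the repetition backwards within $\sigma$, exactly as in the ``third term of a $\kappa$-block'' subcase of Lemma~\ref{lem:4thue} and possible here because, by $(L_3)$, any one in-block value at positions $\set{1,2}$ or $\set{3,4}$ determines its partner.

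The main obstacle will be the subcases $d = 5$ and $d = 6$: since $\gcd(d,4)$ is $1$ and $2$ respectively, the clean divisibility argument ``$r$ is a multiple of $4$'' available when $4 \mid d$ no longer works directly, and the $d$-subsequence visits every in-block index before closing up modulo $8$. For these I expect to refine the analysis by $r \bmod 4$ and to use $(L_7)$---which mandates $4 \mid |j-i|$ for two $\lambda$-blocks at distance $|j-i|$ to coincide given a single matched term, and an even distance to avoid an immediate clash---in the role that $(K_{12})$ played in the $d = 4$ subcase of Lemma~\ref{lem:4thue}; $(L_6)$ should help close off the remaining ambiguities about which $\lambda$-block a partially specified term belongs to.
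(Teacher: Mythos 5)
Your overall skeleton --- Thue-ness via Lemmas~\ref{lem:insert} and~\ref{lem:k6}, structural analogues of $(P_1)$--$(P_7)$, a case split on $d$ and on the in-block index of $y_1$, identification of blocks, and descent via Lemma~\ref{lem:deter} to a repetition in $\slam{t}$ or $\slam{t-1}$ --- matches the paper's. But the engine of the paper's proof is missing. Your analogue of $(P_5)$, that the number of $\zeta$-terms of $\rho_1$ is a multiple of $4$, justified only by ``the period-$4$ structure,'' is neither established nor strong enough. What the paper actually proves and uses (Claim~\ref{cl:zeta_consec}) is that two consecutive $\zeta$-terms in $\rho_1$ force $32 \mid d\cdot r$: the wrap $\cir{4}{4^{t-1}}$ realigns only after four $\zeta$-blocks, i.e.\ after $32$ positions of $\varphi_6^t$, so matching in-block indices is far from enough --- the symbols themselves must match, and a given symbol occupies a given in-block index only once per four consecutive $\zeta$-blocks. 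The whole case analysis runs on this: $d=2,3,5,6$ invoke the claim to get $16\mid r$, $32\mid 3r$, $32\mid r$, $16\mid r$ respectively, and even $d=4$ extracts $32 \mid 4r$ directly from the circular structure; these divisibilities are what make $(L_7)$ applicable. Nothing of comparable strength follows from a count of $\zeta$-terms modulo $4$; moreover, that count claim is exactly what fails in the low-$\zeta$ configurations (the $r=3$ type patterns, or the pattern $CNNC$ for $d=6$, $r=4$) that the paper has to exclude by separate arguments. A smaller but genuine error: eight consecutive terms of $\varphi_6^t$ need not be distinct --- in $\varphi_6^2$ the terms at positions $2$ through $9$ read $2,3,4,5,6,7,8,2$; the correct bound is seven (as the paper states), which still rules out length-$2$ repetitions since $d \le 6$.

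Second, you stop at $r \ge 3$, but the paper's arguments for $d=5$ and $d=6$ explicitly start from $r > 3$, and excluding $r=3$ is not a routine check: it is the reason the paper introduces type vectors, tabulates them over all pairs $(j,\ell)$ (Table~\ref{tbl:types3}), and then eliminates the two surviving patterns $(3,3)$ and $(2,5)$ using the circular construction and $(L_3)$. Finally, for $d=5$ and $d=6$ you offer an expectation rather than an argument, and these are precisely where the new difficulties live: for $d=5$, one $\lambda$-block per period contains no term of $\rho$ at all and may fail to be determined (the paper's $\gamma_0$ analysis, resolved by shifting the repetition by one step), and for $d=6$, each $\lambda$-block meeting $\rho$ contains only one term, so $(L_8)$ leaves a two-fold ambiguity that must be resolved by four subcases on the position of the first preimage term in $\slam{t-1}$, using $(L_3)$, $(L_6)$ and a parity argument. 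You correctly anticipate that $(L_6)$--$(L_8)$ are the relevant tools and that they play the role $(K_{12})$ played for $k=4$, but as written the proposal does not contain the ideas that make these cases close.
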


\begin{proof}
	By Lemmas~\ref{lem:insert} and~\ref{lem:k6}, we have that $\varphi_6^t$ is Thue. 
	Thus, we only need to prove that every $d$-subsequence of $\varphi_6^t$ is also Thue, for every $d \in \set{2,3,4,5,6}$.
	First, we list some general properties and then consider $d$-subsequences separately regarding the values of $d$.
	
	By $(L_3)$ and the definition of circular sequences, every seven consecutive terms of $\varphi_6^t$ are distinct.
	Hence,
	\begin{itemize}
		\item[$(R_1)$] \quad there are no repetitions of length $2$ or $4$ in any $d$-subsequence $\varphi_6^t$.		
	\end{itemize}
	
	Furthermore, since the length of any $\lambda$-block and $\zeta$-block in $\varphi_6^t$ is $4$, one can deduce that:
	\begin{itemize}
		\item[$(R_2)$] \quad \textit{in every $d$-subsequence of $\varphi_6^t$ there are at most two consecutive $\lambda$-terms or $\zeta$-terms;}
		\item[$(R_3)$] \quad \textit{in every $d$-subsequence of $\varphi_6^t$ any repetition contains $\lambda$-terms and $\zeta$-terms;}
	\end{itemize}
	
	Given a $d$-subsequence $\sigma = z_1 z_2 \dots z_n$ of $\varphi_6^t$ consisting of $n$ elements, we define a 
	mapping $\vartheta : \Sigma \rightarrow \set{N,C}^n$, where $\Sigma$ represents the set of all $d$-subsequences of $\varphi_6^t$,
	mapping $\sigma$ to an $n$-component vector, $i$-th component being $N$ if $z_i$ belongs to a $\lambda$-block and $C$ otherwise
	($N$ and $C$ standing for a \textbf{n}on-cyclic and \textbf{c}yclic element, respectively). 
	We call $\vartheta(\sigma)$ the \textit{type vector} of $\sigma$.
	\begin{itemize}
		\item[$(T_1)$] \quad The type vector of any $2$-subsequence contains $CCNN$ or $NNCC$ in the first five components 
			(depending on the position of the first term in the sequence).
		\item[$(T_2)$] \quad The type vector of any $4$-subsequence equals $NCNC$ or $CNCN$ in the first four components.
	\end{itemize}

	Now, suppose the contrary, and let $\rho=\rho_1 \rho_2 = y_1\dots y_r y_{r+1}\dots y_{2r}$ be a repetition in some $d$-subsequence of $\varphi_6^t$.
	We start by analyzing possible values of $r$. 
	
	By $(R_1)$, $r \ge 3$, so suppose first that $r = 3$. We will consider the cases regarding the type vectors of $\rho$.
	By $(R_2)$ and $(R_3)$, there are six possible type vectors for $\rho$, namely:
	$CCN \ CCN$, $CNC \ CNC$, $CNN \ CNN$, $NCC \ NCC$, $NCN \ NCN$, and $NNC \ NNC$. By $(T_1)$ and $(T_2)$, such a sequence does not appear
	in any $\ell$-sequence for $\ell \in \set{2,4}$. Hence, it remains to consider $\ell \in \set{3,5,6}$.	
	Let $j$, $j \in \set{1,2,3,4}$, be the index of $y_1$ in the $\lambda$- or $\zeta$-block it belongs to.
	
	In Table~\ref{tbl:types3}, we present type vectors regarding $j$'s and $\ell$'s.
	\begin{table}[htp!]
		\centering
		\begin{tabular}{|c|c|c|c|}
			\hline
			$j \ / \ \ell$ & 3 			& 5 			& 6 		\\\hline
			$1$ 		& NNC \ NCC 	& NCN \ CCN 	& NCC \ NNC	\\\hline
			$2$ 		& NCC \ NCN 	& \textbf{NCN \ NCN} 	& NCC \ NNC	\\\hline
			$3$ 		& \textbf{NCN \ NCN} 	& NCC \ NCN 	& NNC \ CNN	\\\hline
			$4$ 		& NCN \ CCN 	& NNC \ NCC 	& NNC \ CNN	\\\hline
		\end{tabular}
		\caption{The type vectors of $\rho$ regarding $j$'s and $\ell$'s in the case $r=3$, assuming the first term $y_1$ 
			lies in a $\lambda$-block. In the symmetric case, when $y_1$ is in a $\zeta$-block, 
			the type vector values are simply interchanged.}
		\label{tbl:types3}
	\end{table}	
	The only two type vectors matching the possibilities for the type vectors of $\rho$ are in the cases $(j,\ell) \in \set{(3,3),(2,5)}$.
	In the case $(3,3)$, the indices of $y_1,\dots,y_{6}$ within their blocks are respectively $3,2,1,4,3,2$. 
	When $y_1$ belongs to a $\lambda$-block, $y_2$ and $y_5$ must belong to consecutive $\zeta$-blocks. But, then $y_2 \neq y_5$, 
	due to the construction of circular sequences.
	On the other hand, if $y_1$ belongs to a $\zeta$-block, then $y_2$ is at index $2$ in a $\lambda$-block 
	and $y_5$ is at index $3$ in a $\lambda$-block, so again $y_2 \neq y_5$, due to $(L_3)$.
	
	In the case $(2,5)$, the indices of $y_1,\dots,y_{6}$ within their blocks are respectively $2,3,4,1,2,3$. 
	Suppose first that $y_1$ belongs to a $\zeta$-block. 
	Then, $y_2$ is at index $3$ in a $\lambda$-block 
	and $y_5$ is at index $2$ in a $\lambda$-block, and hence $y_2 \neq y_5$, due to $(L_3)$.
	Finally, suppose $y_1$ belongs to a $\lambda$-block. 
	Then, $y_2$ is at index $3$ in a $\zeta$-block and $y_5$ is at index $2$ in a $\zeta$-block,
	however, the two $\zeta$-blocks are not consecutive, and hence $y_2 \neq y_5$.
	It follows that $r \ge 4$.
	
	Using the construction properties of circular sequences, 
	we can obtain additional properties of $r$ regarding the structure of type vectors.
	\begin{claim}
		\label{cl:zeta_consec}
		If there are two consecutive $\zeta$-terms in $\rho_1$, then $32$ divides $d \cdot r$.
	\end{claim}
	Note that we do not require the two terms being in the same $\zeta$-block.
	\begin{proofclaim}	
		We prove the claim by showing that having two consecutive $\zeta$-terms, $x_{i}$ and $x_{i+d}$, in $\rho_1$ imply 
		that the corresponding two $\zeta$-terms , $x_{j}$ and $x_{j+d}$, in $\rho_2$ must appear at the same indices in their $\zeta$-blocks. 
		This fact further implies that the difference between $i$ and $j$ is $(8 \cdot 4)t$ 
		($8$ since each pair of $\lambda$- and $\zeta$-blocks has $8$ terms, 
		and $4$, since $\zeta$-blocks have period $4$ in $\cir{4}{4^t}$), for some positive integer $t$. 
		On the other hand, there are $d \cdot r$ terms between $x_i$ and $x_j$, and hence $32$ divides $d \cdot r$.
		
		We consider the cases regarding $d$. For $d=1$, the claim is trivial. 
		For $d=2$, the pair of terms $x_{i}$ and $x_{i+2}$ can appear twice in four distinct $\zeta$-blocks.
		However, since the parity of the indices $i$ and $j$ must be the same in this case, they must
		appear in the same $\zeta$-block in $\rho_2$.
		
		In the case $d=3$ a pair of two symbols appear only once in four distinct $\zeta$-blocks, hence there is nothing to prove.
		In the case $d=4$, it is not possible to have two consecutive $\zeta$-terms.
		
		In the cases $d=5$ and $d=6$, the two terms belong to two consecutive $\zeta$-blocks.
		In the former, there is again only one appearance of each pair per four blocks, so it remains to consider the case $d=6$.
		There are two possible appearances of a pair, but since the indices must have the same parity, the pair must appear in the
		same two $\zeta$-blocks. This completes the proof of the claim.
	\end{proofclaim}
	
	We continue by considering the cases regarding $d$.
	\begin{itemize}
		\item{} $d=2$.
			
			If there are no two consecutive terms of $\rho_1$ that belong to the same $\zeta$-block, then $r = 4$
			and $y_1$ is a part of a $\zeta$-block.
			But in this case, there are two consecutive $\lambda$-blocks, uniquely determined by $y_2$, $y_3$ and 
			$y_6$, $y_7$, which must be equal as $y_2 = y_6$ and $y_3 = y_7$, a contradiction to Lemma~\ref{lem:k6}.
			
			So, there is at least one $\zeta$-block which contains two terms of $\rho_1$. 
			By Claim~\ref{cl:zeta_consec}, $r$ is divisible by $32/2 = 16$. 
			Suppose $y_1$ is the first (resp. the second) term of some $\lambda$-block.
			Then, $y_2$, $y_{r+1}$, and $y_{r+2}$ are also $\lambda$-terms, and hence every $\lambda$-block of $\rho$ is uniquely determined.
			By Lemma~\ref{lem:deter}, it follows there is a repetition in $\slam{t}$, a contradiction to Lemma~\ref{lem:k6}. 
			
			Now, suppose $y_1$ is the third (resp. the fourth) term of some $\lambda$-block $\gamma_1$.
			Let $\gamma_2$ be the $\lambda$-block determined by $y_r$ and $y_r+1$. Clearly, $\gamma_1 \neq \gamma_2$,
			otherwise there is a repetition in $\slam{t}$, by Lemma~\ref{lem:deter}. 
			However, since the third and the fourth terms of $\gamma_1$ and $\gamma_2$ are equal, 
			they are either mapped by $\lambda$ from $\set{1,4}$ or $\set{2,3}$. 
			As the $\lambda$-block in $\sigma_1$ following $\gamma_1$ is the same as the $\lambda$-block in $\sigma_2$ following $\gamma_2$,
			we obtain a contradiction due to $(L_6)$.
			
			Finally, suppose $y_1$ is a part of a $\zeta$-block. Since $r$ is divisible by $16$,
			the number of $\lambda$-blocks in each of $\rho_1$ and $\rho_2$ is divisible by $4$,
			and since all of them are uniquely determined, we have a repetition in $\slam{t}$ 
			(in fact already in $\slam{t-1}$), a contradiction.
			
		\item{} $d=3$.
		
			We first show that there are two consecutive $\zeta$-terms in $\rho_1$.
			Suppose the contrary. Then, since $r > 3$ and the fact that the type 
			vectors of $\rho_1$ and $\rho_2$ must match, there are two consecutive $\lambda$-terms in $\rho_1$.
			But, in the type vector, between two pairs of two consecutive $\lambda$-terms, for $d=3$, 
			there are two consecutive $\zeta$-terms, a contradiction.
			
			Hence, we may assume there are two consecutive $\zeta$-terms in $\rho_1$ and by Claim~\ref{cl:zeta_consec},
			$32$ divides $3r$. Observe also that for $d=3$, there is at least one term from $\rho$ in every 
			$\lambda$-block of the covering sequence of $\rho$. Then, by $(L_7)$, we infer that all 
			$\lambda$-blocks in the covering sequences of $\rho_1$ are equal to the corresponding 
			$\lambda$-blocks in the covering sequences of $\rho_2$, and hence there is a repetition in $\slam{t-1}$, a contradiction.
		
		\item{} $d=4$. 
		
			In this case, all the terms of $\rho$ are at the same indices in their $\lambda$- and $\zeta$-blocks.
			As there is at least one $\zeta$-term, by construction of circular sequences, 
			we have that $32$ divides $4r$, and hence $8$ divides $r$. 
			Thus, by $(L_7)$, all $\lambda$-blocks in the covering sequences of $\rho_1$ are equal to the corresponding ones in the covering sequences of $\rho_2$, 
			and hence there is a repetition in $\slam{t-1}$, a contradiction.
		
		\item{} $d=5$.
		
			In this case, $\lambda$- and $\zeta$-blocks of the covering sequence of $\rho_1$ contain 
			precisely one term from $\rho_1$ with an exception of every fifth block, which is being skipped.
			Hence, there are two consecutive $\lambda$- or $\zeta$-terms in $\rho_1$ as soon as $r > 4$. 
			As $r > 3$, the only possible $\rho_1$ with no consecutive terms of the same type has length $4$.
			However, in such a case, the terms $y_1$ and $y_{r+1}$ are not of the same type, so $r > 4$.
			
			Suppose first there are no consecutive $\zeta$-terms in $\rho_1$. In that case, there are 
			two consecutive $\lambda$-terms in $\rho_1$, and hence also in $\rho_2$. Moreover, since
			between every pair of consecutive $\lambda$-terms there are two consecutive $\zeta$-terms,
			the only possible $r$ for such $\rho$, satisfying also that the type vectors of $\rho_1$ and $\rho_2$
			are the same, is $8$. However, then $y_1$ and $y_{r+1}$ are both $\zeta$-terms but the difference
			between their indices in the covering sequence is $5\cdot 8 = 40$, meaning that $y_{1} \neq y_{r+1}$.
			
			So, we may assume there are two consecutive $\zeta$-terms in $\rho_1$ and, by Claim~\ref{cl:zeta_consec},
			$32$ divides $5r$ (hence $32$ divides $r$ also). 
			By $(L_7)$, all $\lambda$-blocks in the covering sequence of $\rho_1$ that contain one term from $\rho_1$
			are equal to the corresponding $\lambda$-blocks in the covering sequence of $\rho_2$.			
			Furthermore, since in the covering sequence of $\rho$ three out of every four $\lambda$-blocks contain 
			one term from $\rho$, also the $\lambda$-block $\gamma_0$ without a term is uniquely determined, unless it is the first
			$\lambda$-block of $\hat{\rho_1}$ or the last $\lambda$-block of $\hat{\rho_2}$. 
			In the case when $\gamma_0$ is determined, the covering sequence of $\rho_1$ contains the same sequence of $\lambda$-blocks as
			the covering sequence of $\rho_2$, and so there is a repetition in $\slam{t-1}$, a contradiction.
			
			Hence, we may assume $\gamma_0$ is not uniquely determined, and without loss of generality, suppose it is 
			the first $\lambda$-block of $\hat{\rho_1}$. 
			Since $\gamma_0$ is not uniquely determined, it is mapped from either the third or the fourth symbol of some $\lambda$-block $\xi_0$ of $\slam{t-1}$.
			In the former case, $\xi_0$ is completely determined, since $r \ge 32$ and one can determine the $\lambda$-block following $\xi_0$ in $\slam{t-1}$,
			and hence also $\gamma_0$ is completely determined.
			In the latter case, observe that, $y_2\dots y_{r+1}y_{r+2}\dots y_{2r}x_{j+5}$ (with $y_{2r} = x_j$) is also a repetition,
			and considering it, we have all $\lambda$-blocks in $\hat{\rho}$ determined, a contradiction.
			
		\item{} $d=6$.
		
			In this case, $\rho_1$ alternately contains two consecutive $\lambda$- and two consecutive $\zeta$-terms,
			with a possible shift in the beginning depending on the index of first term in the covering sequence of $\rho$.
			Hence, as $r > 3$, there are always two consecutive $\zeta$-terms in $\rho_1$ unless $r=4$ and 
			the type vector of $\rho_1$ is $CNNC$. However, in that case $y_1 \neq y_{r+1}$ by the construction of circular sequences.
			
			Thus, we may assume there are two consecutive $\zeta$-terms in $\rho_1$ and, by Claim~\ref{cl:zeta_consec}, 
			$32$ divides $6r$, hence $16$ divides $r$. 
			Let $r = 16t$; then the length of the covering sequence of $\rho_1$ is $6 \cdot 16t = 96t$ 
			and therefore there are $12t$ $\lambda$-blocks, where every two out of three consecutive $\lambda$-blocks contain a term from $\rho_1$.			
			By $(L_7)$, all $\lambda$-blocks in the covering sequence of $\rho_1$ that contain one term from $\rho_1$
			are equal to the corresponding $\lambda$-blocks in the covering sequence of $\rho_2$. 
			Recall that a $\lambda$-block is not uniquely determined by one term; it can be one of two possible (see $(L_8)$).
			
			Let $\sigma^{t}$ be the covering sequence of $\rho$ with all $\zeta$-blocks removed and let
			$\sigma^{t-1} = \lambda^{-1}(\sigma)$. Clearly, $\sigma^{t-1}$ is a subsequence of $\slam{t-1}$.
			Let $\sigma_1^{t-1}$ and $\sigma_2^{t-1}$ be the sequences defined analogously for $\rho_1$ and $\rho_2$, respectively.
			As we deduced above, $\sigma_1^{t-1} = z_1z_2\dots z_{12t}$ has $12t$ elements. 
			We consider four subcases regarding the index of $z_1$ in $\slam{t-1}$.
			Note that in each of the four cases, for every $z_i$ that is a preimage of some $\lambda$-block with one term from $\rho$,
			we can uniquely determine which symbol $z_i$ represents simply by $(L_8)$ and the position of $z_i$ in the $\lambda$-block of $\slam{t-1}$.
			Consequently, every ``complete'' $\lambda$-block of $\sigma_1^{t-1}$ is uniquely determined by $(L_3)$, since we know at least two of its terms, and in the case,
			when two terms are known, they are at indices $2$ and $3$.
			
			Suppose first $z_1$ is at index $4i+1$ in $\lambda^{t-1}$ for some $i$. 
			Then, there are $3t$ complete uniquely determined $\lambda$-blocks in $\sigma_1^{t-1}$, and hence by Lemma~\ref{lem:deter}, 
			there is a repetition also in $\slam{t-1}$, a contradiction.
			
			Next, suppose $z_1$ is at index $4i+4$ in $\lambda^{t-1}$. There are $3t-1$ complete uniquely determined $\lambda$-blocks in $\sigma_1^{t-1}$
			and one $\lambda$-block, with $3$ terms $z_{12t-2}z_{12t-1}z_{12t}$. However, as argued above, the latter is also uniquely determined, which
			means that $z_2\dots z_{24t}w$, where $w$ is the element at index $4i+4+24t+1$ in $\slam{t-1}$, is also a repetition, and hence we may use the
			argumentation for $z_1$ being at index $4i+1$.
			
			Suppose $z_1$ is at index $4i+3$ in $\lambda^{t-1}$. In this case, there are $3t-2$ complete uniquely determined $\lambda$-blocks in $\sigma_1^{t-1}$,
			and two $\lambda$-blocks having two terms in $\sigma_1^{t-1}$. The second one, $z_{12t-1}z_{12t}z_{12t+1}z_{12t+2}$ has the other two terms in $\sigma_2^{t-1}$. 
			Now, if $3t$ is divisible by $4$, then the $\lambda$-block $z_{-1}z_{0}z_1z_2$ equals $z_{12t-1}z_{12t}z_{12t+1}z_{12t+2}$ and we again can shift the 
			sequence to the left as above, obtaining a repetition.			
			Hence $3t$ is not divisible by $4$. 
			Consider the $\lambda$-blocks $\alpha_1=z_{3}z_{4}z_{5}z_{6}$ and $\alpha_1=z_{7}z_{8}z_{9}z_{10}$. They are uniquely determined and they must be 
			equal to the $\lambda$-blocks $z_{3+12t}z_{4+12t}z_{5+12t}z_{6+12t}$ and $z_{7+12t}z_{8+12t}z_{9+12t}z_{10+12t}$, which is not possible due to
			the parity condition and $(L_6)$.
			
			Finally, suppose $z_1$ is at index $4i+2$ in $\lambda^{t-1}$. Again, if $3t$ is divisible by $4$, then we shift the sequence by one to the right 
			(I will write this nicer), as the first (incomplete) $\lambda$-block in $\sigma_1^{t-1}$ must match the first (incomplete) $\lambda$-block in $\sigma_2^{t-1}$,
			and we obtain a repetition in $\slam{t-1}$. Otherwise, $3t$ is not divisible by $4$, and we obtain a contradiction on the equality of first
			two complete $\lambda$-blocks in $\sigma_1^{t-1}$ and $\sigma_2^{t-1}$.
	\end{itemize}
\end{proof}

\section{Discussion}

In this paper, we improve the current state of Conjecture~\ref{conj:k+2} by showing that it is true for every integer $k$ between $1$ and $8$.
In particular, we present two proving techniques, which are in their essence similar, but very much different in practice. 
Namely, the proving technique presented in Section~\ref{sec:pas} is (provided there are available computing resources) efficient for confirming Conjecture~\ref{conj:k+2}
for small $k$'s, since one can employ computing resources to verify small instances, while the statement of the conjecture then holds almost trivially for larger instances. 
However, to be able to prove Conjecture~\ref{conj:k+2} in general or at least for an infinite number of integers, it will fail.

On the other hand, the method described in Section~\ref{sec:cons} is more promising. 
We are using a special construction of Thue sequences with properties allowing to prove that they are also $k$-Thue. 
This technique needs more argumentation for proving that the generated sequences are indeed $k$-Thue, but allows for establishing properties 
for a bigger set of $k$'s, possibly infinite.

\bibliographystyle{plain}
\bibliography{MainBib}


\end{document}